\newtheorem{theorem}{Theorem}[section]
\newtheorem{voisinconjecture}[theorem]{Voisin's Conjecture}
\newtheorem{noriconjecture}[theorem]{Nori's Conjecture}
\newtheorem{mainconjecture}[theorem]{Main Conjecture}
\newtheorem{lemma}[theorem]{Lemma}
\newtheorem{conjecture}[theorem]{Conjecture}
\newtheorem{proposition}[theorem]{Proposition}
\newtheorem{prop}[theorem]{Proposition}
\theoremstyle{definition}
\newtheorem{remark}[theorem]{Remark}
\newtheorem{example}[theorem]{Example}
\newtheorem{examples}[theorem]{Examples}
\newtheorem{definition}[theorem]{Definition}
\newtheoremstyle{TheoremNum}
        { 1em}{1 em}              
        {\itshape}                      
        {}                              
        {\bfseries}                     
        {.}                             
        { 0.5 em}                             
        {\thmname{#1}\thmnote{ \bfseries #3}}
    \theoremstyle{TheoremNum}
    \newtheoremstyle{TheoremNum}
        { 1em}{1 em}              
        {\itshape}                      
        {}                              
        {\bfseries}                     
        {.}                             
        { 0.5 em}                             
        {\thmname{#1}\thmnote{ \bfseries #3}}
    \theoremstyle{TheoremNum}
 \newtheoremstyle{TheoremNum}
        { 1em}{1 em}              
        {\itshape}                      
        {}                              
        {\bfseries}                     
        {.}                             
        { 0.5 em}                             
        {\thmname{#1}\thmnote{ \bfseries #3}}
    \theoremstyle{TheoremNum}
     \newtheoremstyle{TheoremNum}
        { 1em}{1 em}              
        {\itshape}                      
        {}                              
        {\bfseries}                     
        {.}                             
        { 0.5 em}                             
        {\thmname{#1}\thmnote{ \bfseries #3}}
    \theoremstyle{TheoremNum}
         \newtheoremstyle{TheoremNum}
        { 1em}{1 em}              
        {\itshape}                      
        {}                              
        {\bfseries}                     
        {.}                             
        { 0.5 em}                             
        {\thmname{#1}\thmnote{ \bfseries #3}}
    \theoremstyle{TheoremNum}
\newcommand{\CH}{\textup{CH}}
\newcommand{\End}{\text{End}}
\newcommand{\Z}{\mathbb{Z}}
\newcommand{\Sym}{\text{Sym}}
\newcommand{\ho}{\mathfrak{h}^\circ}
\newcommand{\h}{\mathfrak{h}}
\newcommand{\PP}{\mathbb{P}}
\numberwithin{equation}{section}
\begin{document}

\bibliographystyle{alpha.bst}

\title{Effective zero-cycles and the Bloch--Beilinson filtration}

\author{Olivier Martin}
\author{Charles Vial}

\address{Instituto de Matem\'atica Pura e Aplicada, Brasil}
\email{olivier.martin@impa.br}

\address{Universit\"at Bielefeld, Germany}
\email{vial@math.uni-bielefeld.de}

\thanks{The research of Ch.V.\ was funded by the Deutsche Forschungsgemeinschaft (DFG, German Research Foundation) -- Project-ID 491392403 -- TRR 358}

\begin{abstract}
A conjecture of Voisin states that two points on a smooth projective complex variety whose algebra of holomorphic forms is generated in degree 2 are rationally equivalent to each other if and only if their difference lies in the third step of the Bloch--Beilinson filtration. In this note, we formulate a generalization that allows for rational equivalence of effective zero-cycles of higher degree, at the expense of looking deeper in the Bloch--Beilinson filtration. In the first half, we provide evidence in support of this conjecture 
in the case of abelian varieties and projective hyper-K\"ahler manifolds. Notably, we give explicit criteria for rational equivalence of effective zero-cycles on moduli spaces of semistable sheaves on K3 surfaces,  generalizing that of Marian--Zhao. In the second half, in an effort to explain our main conjecture, we formulate a second conjecture predicting when the diagonal of a smooth projective variety belongs to a subalgebra of the ring of correspondences generated in low degree.
\end{abstract}

\maketitle

\section{Introduction}

We work throughout over the field of complex numbers and with Chow rings with rational coefficients. The class in $\CH_0(X)$ of a closed point $x\in X$ will be denoted by $[x]$.

\subsection{Effective zero-cycles on moduli spaces of sheaves on K3 surfaces}
Building on work of O'Grady~\cite{OG-moduli} and Shen--Yin--Zhao~\cite{SYZ}, Marian and Zhao established the following criterion for two points on a smooth projective moduli space of semistable sheaves on a projective K3 surface to be rationally equivalent.

\begin{theorem}[Marian--Zhao \cite{MZ}\,; see Theorem~\ref{MZ}] \label{T2:MZ}
	 Let $\mathcal M$ be a smooth projective moduli space of semistable sheaves on a
	projective K3 surface $S$ and
	let $\mathcal{F}$ and $\mathcal{G}$ be closed points of $\mathcal M$. Then
	\begin{align*}
	[\mathcal{F}]= [\mathcal{G}] \ \mbox{in}\ \CH_0(\mathcal M) 
\	\iff \
  c_2(\mathcal{F}) = c_2(\mathcal{G})\ \mbox{in}\ \CH_0(S).
	\end{align*}
\end{theorem}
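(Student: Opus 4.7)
The $(\Rightarrow)$ direction is immediate from the existence of a quasi-universal family $\mathcal E$ on $S\times\mathcal M$: with rational coefficients the class $c_2(\mathcal E)\in\CH^2(S\times\mathcal M)$ is canonically defined (the non-uniqueness of $\mathcal E$ only affects it by a twist that is killed upon taking $c_2$ modulo lower-degree Chern classes), and its action as a correspondence sends $[\mathcal F]\in\CH_0(\mathcal M)$ to $c_2(\mathcal F)\in\CH_0(S)$. Thus an equality $[\mathcal F]=[\mathcal G]$ in $\CH_0(\mathcal M)$ forces $c_2(\mathcal F)=c_2(\mathcal G)$ in $\CH_0(S)$.

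The content lies in the $(\Leftarrow)$ direction. My plan is to use the hyper-K\"ahler geometry of $\mathcal M$ along the lines of O'Grady and Shen--Yin--Zhao. The first step is to establish that the fibers of the second Chern class map $c_2\colon\mathcal M\to S^{(n)}$, $\mathcal F\mapsto c_2(\mathcal F)$, are \emph{constant-cycle subvarieties} of $\mathcal M$, so that two sheaves with literally equal (not just rationally equivalent) second Chern class already satisfy $[\mathcal F]=[\mathcal G]$ in $\CH_0(\mathcal M)$. This is where one uses essentially that $\mathcal M$ is hyper-K\"ahler and carries a positive-dimensional family of rational curves passing through a general point along which the Mukai vector of the underlying family of sheaves varies only through elementary modifications supported at a fixed zero-cycle on $S$.

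The second step is to upgrade this from equality of effective cycles to rational equivalence of effective cycles: if two effective zero-cycles on $S$ in the relevant degree are rationally equivalent, then so are the corresponding points of $\mathcal M$. The strategy is to lift a chain of ``Mumford moves'' relating the two effective cycles on $S$ to a chain of rational curves in $\mathcal M$, each constructed by performing simultaneous elementary modifications of a sheaf along a rational curve $\PP^1\subset S$ connecting two rationally equivalent points. The main obstacle will be executing this lift uniformly for all moduli spaces $\mathcal M$, since elementary modifications can destroy the chosen stability condition; the cleanest route is to first establish the claim for $\mathcal M=S^{[n]}$, where the Beauville--Voisin canonical zero-cycle and the Hilbert--Chow morphism make the ``lift'' transparent, and then transport the statement to a general $\mathcal M$ via a Fourier--Mukai equivalence sending $\mathcal M$ to a birational model dominated by (or dominating) a Hilbert scheme, verifying along the way that such an equivalence is compatible with $c_2$ modulo rational equivalence on $S$.
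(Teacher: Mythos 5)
This statement is cited from Marian--Zhao~\cite{MZ} and is not proved in the paper, so there is no internal proof to compare against; I will instead point out the genuine gaps in your sketch.

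The most serious issue is the claim that $\mathcal M$ carries a positive-dimensional family of rational curves passing through a general point: a projective hyper-K\"ahler manifold has trivial canonical bundle and is therefore not uniruled, so no such family exists. You presumably mean a family of rational curves filling a fiber of some map, but the map you invoke, $c_2\colon\mathcal M\to S^{(n)}$, does not exist as a morphism of varieties for a general moduli space $\mathcal M$: for a sheaf $\mathcal F$ on $S$, the class $c_2(\mathcal F)$ is canonically defined only in $\CH_0(S)$, not as an effective zero-cycle, so $\mathcal F\mapsto c_2(\mathcal F)$ does not land in $S^{(n)}$. (For $\mathcal M=S^{[n]}$ with $\mathcal F=\mathcal I_Z$ the role of this map is played by the Hilbert--Chow morphism, but over the open stratum its fibers are single points and the constant-cycle observation is then vacuous; it only has content on the punctual strata, which is far from the full strength of the theorem even in that case.)

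Even restricting Step~2 to $S^{[n]}$, lifting rational equivalences on $S$ to rational curves on $S^{[n]}$ is the heart of O'Grady's argument and is not a matter of tracking elementary modifications along lines $\PP^1\subset S$. A rational equivalence between two effective zero-cycles of degree $n$ on a K3 surface is witnessed by linear equivalences on curves $C\subset S$ of arbitrary genus, not just rational curves, so the ``simultaneous elementary modifications along $\PP^1\subset S$'' you describe cannot account for all the relations one needs. Finally, a smaller point: the ($\Rightarrow$) direction as you phrase it is off by a normalization, since a quasi-universal family satisfies $\mathcal E|_{S\times\{\mathcal F\}}\cong\mathcal F^{\oplus r}$, so $c_2(\mathcal E)_*[\mathcal F]=r\,c_2(\mathcal F)+\binom{r}{2}c_1(\mathcal F)^2$, not $c_2(\mathcal F)$; one must use a suitably normalized Mukai-type class to get a correspondence inducing $[\mathcal F]\mapsto c_2(\mathcal F)$. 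The genuine content of Marian--Zhao lies elsewhere: it builds on Shen--Yin--Zhao's identification of O'Grady's filtration with a filtration coming from the derived category of $S$, and on wall-crossing/derived-equivalence results of Bayer--Macr\`{\i} to reduce a general $\mathcal M$ to the Hilbert scheme; your transport-by-Fourier--Mukai instinct is in the right direction, but the compatibility of derived equivalences with the second Chern class in $\CH_0(S)$ is itself one of the nontrivial points, not a routine verification.
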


A first aim of this paper is to establish the following generalization to effective zero-cycles of arbitrary degree:

\begin{theorem} [Theorem~\ref{T}]\label{T2:T}
		 Let $\mathcal M$ be as in Theorem~\ref{T2:MZ} and let $2n$ be its dimension.
		 Let $\mathcal{F}_1, \ldots, \mathcal{F}_m$ and $\mathcal{G}_1,\ldots, \mathcal{G}_m$ be closed points of $\mathcal M$. Then
	$$\sum_{i=1}^{m}[\mathcal{F}_i]=\sum_{i=1}^{m} [\mathcal{G}_i] \ \mbox{in}\ \CH_0(\mathcal M)  
	\ \iff \
	 \sum_{i=1}^{m} c_2(\mathcal{F}_i)^{\times k} =\sum_{i=1}^{m} c_2(\mathcal{G}_i)^{\times k} \ \mbox{in}\ \CH_0(S^k) \ \mbox{for all } k\leq \min (m,n).$$
\end{theorem}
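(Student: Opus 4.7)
The plan is to handle the easy direction by a $c_2$-style correspondence and the hard direction by combining Newton's identities in a symmetric-product Chow algebra with a Beauville--Voisin-style decomposition of $\CH_0(\mathcal M)$ that refines Marian--Zhao.

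For $(\Rightarrow)$, pick a quasi-universal sheaf $\mathcal E$ on $\mathcal M \times S$ and, for each $k \geq 1$, form the correspondence
$$
\Psi_k \;:=\; \prod_{i=1}^{k} \pr_{\mathcal M, S_i}^{*}\, c_2(\mathcal E) \;\in\; \CH^{2k}(\mathcal M \times S^k),
$$
where $\pr_{\mathcal M, S_i}$ denotes the projection $\mathcal M \times S^k \to \mathcal M \times S$ keeping the $i$-th factor of $S^k$. The induced map $\CH_0(\mathcal M) \to \CH_0(S^k)$ sends $[\mathcal F]$ to $c_2(\mathcal F)^{\times k}$, so the equality $\sum [\mathcal F_i] = \sum [\mathcal G_i]$ in $\CH_0(\mathcal M)$ immediately yields $\sum c_2(\mathcal F_i)^{\times k} = \sum c_2(\mathcal G_i)^{\times k}$ in $\CH_0(S^k)$ for every $k$, in particular for $k \leq \min(m, n)$.

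For $(\Leftarrow)$, set $\alpha := \sum [\mathcal F_i] - \sum [\mathcal G_i]$ and suppose $\Psi_k(\alpha) = 0$ for $k \leq \min(m, n)$. First I would perform a \emph{Newton reduction} to get $\Psi_k(\alpha) = 0$ for all $k = 1, \ldots, n$: when $m \geq n$ this is already the hypothesis, and when $m \leq n$, Newton's identities applied to the multisets $\{c_2(\mathcal F_i)\}_{i=1}^m$ and $\{c_2(\mathcal G_i)\}_{i=1}^m$ in the convolution $\Q$-algebra $\bigoplus_{k \geq 0} \CH_0(\Sym^k S)$ express every power sum $p_k$ as a universal polynomial in $p_1, \ldots, p_m$, so the vanishing of $\Psi_k(\alpha)$ for $k \leq m$ propagates to all $k \geq 1$. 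The core of the argument is then to produce a decomposition
$$
\CH_0(\mathcal M) \;=\; \bigoplus_{k=0}^{n} \CH_0(\mathcal M)_{(2k)}
$$
coming from a multiplicative Chow--K\"unneth decomposition of the motive of $\mathcal M$ (built from $c_2(\mathcal E)$ and the Beauville--Voisin point $o_S \in \CH_0(S)$, in the spirit of Shen--Yin--Zhao), with the property that $\Psi_k$ annihilates $\CH_0(\mathcal M)_{(2j)}$ for $j \neq k$ and restricts to an injection $\CH_0(\mathcal M)_{(2k)} \hookrightarrow \CH_0(S^k)$. The vanishing of all $\Psi_k(\alpha)$ for $k \leq n$ would then force $\alpha = 0$.

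The hard part will be this piecewise injectivity for $k \geq 2$: for $k = 1$ it is exactly Marian--Zhao, but the higher pieces are a genuine strengthening. For $\mathcal M$ birational to a Hilbert scheme $S^{[n]}$, one can hope to import the decomposition via birational invariance of $\CH_0$ on smooth projective varieties; in general the expected route is to imitate O'Grady's constancy-of-cycles argument along Lagrangian $\PP^n$-fibers in $\mathcal M$ to produce explicit rational equivalences in $\CH_0(\Sym^m \mathcal M)$ between $(\mathcal F_1, \ldots, \mathcal F_m)$ and $(\mathcal G_1, \ldots, \mathcal G_m)$, realized via the iterated $c_2$-correspondence.
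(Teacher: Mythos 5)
Your easy direction and the Newton/symmetric-polynomial reduction are both fine and in the same spirit as the paper (the paper invokes the fundamental theorem on symmetric polynomials inside its Proposition~\ref{P:cogen}, exactly to pass from all $k\leq n$ to $k\leq\min(m,n)$). The gap is in what you yourself flag as ``the core of the argument.'' You posit a decomposition $\CH_0(\mathcal M)=\bigoplus_{k=0}^n\CH_0(\mathcal M)_{(2k)}$ on which the $c_2$-correspondences $\Psi_k$ act diagonally and injectively, and then offer only a sketch of a hope (birational invariance, O'Grady's constancy argument) for producing it. That is precisely the theorem that has to be proved, and the proposal does not prove it. The paper's proof rests on the specific result that the birational motive $\ho(\mathrm M_\sigma(v))$ is canonically isomorphic, as a co-algebra object, to $\Sym^{\leq n}\ho_2(S)$, where $\ho_2(S)$ is the transcendental birational motive of $S$; this is \cite[Thm.~3.1, Thm.~5.5]{Vial-JMPA} and is itself derived from the Marian--Zhao theorem together with the Beauville--Voisin class and the machinery of birational motives. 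Once you have this split injection $\ho(\mathcal M)\hookrightarrow\Sym^{\leq n}\ho_2(S)$, Proposition~\ref{P:cogen} does the rest. Without citing or reproving that isomorphism, the argument is incomplete.

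A secondary flaw: your claimed diagonal property is not actually true for the unshifted correspondence $\Psi_k$. Writing $c_2(\mathcal F)=c[o_S]+\varpi_*[\mathcal F]$ with $[o_S]$ the Beauville--Voisin point, expanding $c_2(\mathcal F)^{\times k}$ shows that $\Psi_k$ picks up contributions from the lower-graded pieces $\CH_0(\mathcal M)_{(2j)}$, $j<k$, so it does not annihilate them. The correct diagonal operator is the shifted one, induced by $\varpi^{\otimes k}\circ\delta_*^{k-1}$, i.e.\ using $c_2(\mathcal F)-c[o_S]$ in place of $c_2(\mathcal F)$. The paper therefore first proves the criterion with the shifted power sums $\sum_i(c_2(\mathcal F_i)-c[o_S])^{\times k}$ and then, via a short multinomial-expansion argument, observes the equivalence with the unshifted system $\sum_i c_2(\mathcal F_i)^{\times k}$. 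You will need to incorporate this shift (or the final untwisting step) to make the diagonal picture correct.
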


We also refer to Theorem~\ref{T2} and Theorem~\ref{T3}  for versions of Theorem~\ref{T2:T} concerned with other types of hyper-K\"ahler varieties, namely generalized Kummer varieties and Fano varieties of lines on smooth cubic fourfolds.

\subsection{Rational equivalence of effective zero-cycles}
As outlined in \cite[\S 2.2]{V3},
motivated by the criterion of Marian--Zhao, Voisin formulated the following:

\begin{voisinconjecture}[{\cite[Conj.~2.11]{V3}}]\label{Vconj}
	Let $X$ be a smooth projective variety whose algebra of holomorphic forms is generated in degree $\leq 2$. Then, given closed points $x,y$ of $X$,
	\begin{equation*}
	[x]=[y] \ \mbox{in} \ \CH_0(X) \ \iff \  [x]=[y] \ \mbox{in} \ \CH_0(X)/F_{\mathrm{BB}}^3\CH_0(X),
	\end{equation*}
	where $F_{\mathrm{BB}}^\bullet$ denotes the (conjectural) Bloch--Beilinson filtration.
\end{voisinconjecture}

In turn, motivated by Theorem~\ref{T2:T}, we propose the following generalization of Voisin's Conjecture~\ref{Vconj}, by allowing for rational equivalence of effective zero-cycles of higher degree,  at the expense of looking deeper into the Bloch--Beilinson filtration. 

\begin{mainconjecture}\label{mainconj}
Let $X$ be a smooth projective variety whose algebra of holomorphic forms is generated in degree $\leq d$. Then, for closed point $x_1,\ldots, x_m,y_1,\ldots, y_m\in X,$
\begin{equation*}
\sum_{i=1}^m [x_i]=\sum_{i=1}^m [y_i] \ \mbox{in} \ \CH_0(X) \ \ \iff \ \ \sum_{i=1}^m [x_i]=\sum_{i=1}^m [y_i] \ \mbox{in} \ \CH_0(X)/F_{\mathrm{BB}}^{md+1}\CH_0(X).
\end{equation*}
Equivalently, the image of the map
\begin{align*}
\Phi_m: \ \ \ \ \  \  \textup{Sym}^mX\times \textup{Sym}^mX \ \  \ \ \ \ & \ \longrightarrow \  \ \ \ \CH_0(X)\\
(x_1+\cdots+x_m,y_1+\cdots+y_m)& \ \longmapsto  \ \sum_{i=1}^m [x_i]-\sum_{i=1}^m [y_i].\end{align*}
 intersects $F_{\mathrm{BB}}^{md+1}\CH_0(X)$ only in $\{0\}$.
\end{mainconjecture}

\begin{remark}\label{triv}
	Since the conjectural Bloch--Beilinson filtration satisfies $F_{\mathrm{BB}}^r\CH_0(X)=0$ for $r\geq \dim X$, Main Conjecture \ref{mainconj} is only interesting for small $m$ and $d$. 
	For instance, if the Bloch--Beilinson filtration exists, the conjecture holds trivially for curves or varieties with an indecomposable top form e.g., Calabi-Yau varieties, complete intersections of general type, etc.
\end{remark}

In practice, we will consider Main Conjecture~\ref{mainconj} with respect to candidate filtrations for the Bloch--Beilinson filtration.
As a first example, using Beauville's filtration $F_{\mathrm{B}}^\bullet$ as the candidate Bloch--Beilinson filtration for abelian varieties, we have:
\begin{theorem}[Theorem \ref{uncondabvar}]
Let $A$ be an abelian $g$-fold and let $x_1,\ldots, x_m,y_1,\ldots, y_m\in A$,
\begin{align*}
 \sum_{i=1}^m [x_i]=\sum_{i=1}^m [y_i]   \ \mbox{in} \ \textup{CH}_0(A) 
 \ \iff \
 \sum_{i=1}^m [x_i] = \sum_{i=1}^m [y_i]  \ \mbox{in} \ 
 \CH_0(X)/F_{\mathrm{B}}^{m+1}\CH_0(X).
\end{align*}
\end{theorem}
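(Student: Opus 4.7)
The plan is to combine Beauville's multiplicative decomposition of $\CH_0(A;\Q)$ with a Newton's identities argument. Recall that $\CH_0(A;\Q) = \bigoplus_{s=0}^{g}\CH_0(A)_{(s)}$, where $\CH_0(A)_{(s)}$ is the eigenspace on which the push-forward $[n]_*$ of multiplication-by-$n$ acts by $n^s$, and the Beauville filtration is $F_{\mathrm{B}}^{t}\CH_0(A) = \bigoplus_{s\geq t}\CH_0(A)_{(s)}$. The Pontryagin product $*$ respects this grading, and since $F_{\mathrm{B}}^{g+1}=0$ the ideal $F_{\mathrm{B}}^1$ is $*$-nilpotent. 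Hence for every closed point $x\in A$ one can form
$$\ell_x \,:=\, \log_*\!\bigl([x]\bigr) \,=\, \sum_{k\geq 1} \frac{(-1)^{k-1}}{k}\bigl([x]-[0_A]\bigr)^{*k} \,\in\, F_{\mathrm{B}}^1\CH_0(A;\Q).$$
Using that $[n]_*$ is a ring morphism for $*$ and that $[nx] = [x]^{*n}$, one obtains $[n]_*\ell_x = n\,\ell_x$, forcing $\ell_x \in \CH_0(A)_{(1)}$. Inverting, $[x] = \exp_*(\ell_x)$, which yields the closed-form expression $[x]_{(s)} = \tfrac{1}{s!}\,\ell_x^{*s}$ for the Beauville components of a point.

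The direction $(\Rightarrow)$ is tautological. For the converse, assume $\sum_{i=1}^m [x_i] - \sum_{i=1}^m [y_i] \in F_{\mathrm{B}}^{m+1}\CH_0(A)$. Setting $a_i := \ell_{x_i}$ and $b_i := \ell_{y_i}$ and projecting onto Beauville components of degree $s=1,\ldots,m$, the hypothesis reads
$$\sum_{i=1}^m a_i^{*s} \,=\, \sum_{i=1}^m b_i^{*s} \qquad (s = 1,\ldots,m),$$
i.e., the first $m$ power sums of the two $m$-tuples coincide in the commutative $\Q$-algebra $(\CH_0(A;\Q), *)$.

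The key step is now purely symbolic: in any commutative $\Q$-algebra, the first $m$ power sums of $m$ elements determine all higher power sums. Indeed, Newton's identities recover the elementary symmetric functions $e_1,\ldots,e_m$ from $p_1,\ldots,p_m$, and then each $a_i$ satisfies its own characteristic polynomial $a_i^{*m} - e_1 * a_i^{*(m-1)} + \cdots + (-1)^m e_m = 0$, which propagates the equality $p_k(a) = p_k(b)$ to every $k > m$ by induction on $k$. Hence $\sum_i a_i^{*s} = \sum_i b_i^{*s}$ for all $s \geq 1$, and applying $\exp_*$ componentwise yields $\sum_i [x_i] = \sum_i [y_i]$ in $\CH_0(A;\Q)$. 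The only non-formal ingredient is the fact that $\ell_x$ lies purely in Beauville degree $1$, which rests on the compatibility of $[n]_*$ with the Pontryagin product and on the nilpotency of $F_{\mathrm{B}}^{1}$; the rest of the argument is a formal manipulation of symmetric functions in a commutative algebra.
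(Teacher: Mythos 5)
Your argument is correct, and it is a genuinely cleaner route than the paper's. The paper proves Theorem~\ref{uncondabvar} by fixing a symmetric ample divisor $\Theta$, considering the correspondence $x\mapsto D_x^j$ with $D_x=\Theta_x-\Theta$, showing it kills $\CH^g_{(s)}(A)$ for $s>j$, extracting the equalities $\sum_i D_{x_i}^j=\sum_i D_{y_i}^j$ for $j\le m$, extending them to all $j$ via the fundamental theorem of symmetric polynomials, and then invoking Beauville's identity $\tfrac{\Theta^{g-j}}{(g-j)!}D_x^j=\tfrac{\Theta^g}{g!}*\gamma(x)^{*j}$ to translate back to Pontryagin powers of $\gamma(x)$ and exponentiate. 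You bypass the polarization entirely: you work directly with $\ell_x=\log_*[x]$, prove $\ell_x\in\CH_0(A)_{(1)}$ from the compatibility of $[n]_*$ with the Pontryagin ring structure (this is the one non-formal ingredient, and it is correctly argued), read the hypothesis off the grading as the coincidence of the first $m$ Pontryagin power sums of $\ell_{x_i}$ and $\ell_{y_i}$, propagate to all power sums via Newton's identities, and exponentiate. This is more intrinsic and more self-contained (the paper's proof defers to the conditional Proposition~\ref{abprop} for the final steps), and it avoids Beauville's Fourier-transform formula. The underlying idea is the same in both cases --- reduce to power sums of the degree-one Beauville parts, appeal to symmetric-function theory, and exponentiate --- but your implementation strips away the auxiliary choices. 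One small remark: the paper writes $\gamma(x)=\sum_{k\ge1}\tfrac1k([x]-[0_A])^{*k}$ without the alternating sign $(-1)^{k-1}$; your formula for $\log_*$ is the correct one, and this appears to be a typo in the paper.
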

Here, we write 
$$ F_{\mathrm{B}}^{m+1}\CH_0(X) =_{\textup{def}} \bigoplus_{s=m+1}^g \textup{CH}^g_{(s)}(A),$$ where $\textup{CH}^g_{(s)}(A)$ are the graded pieces of the Beauville filtration on $\textup{CH}_0(A)$:
$$\textup{CH}^g_{(s)}(A) \ =_{\textup{def}} \ \{\alpha \in \textup{CH}^g(A): [k]^*(\alpha)=k^{2g-s}\alpha \text{ for all } k\in \mathbb{Z} \},$$
where $[k]: A\longrightarrow A$ is the multiplication by $k$ isogeny.
\medskip

There are two further candidates for the Bloch--Beilinson filtration on $\CH_0(X)$ that we will consider in this work. The first, $F_{\mathrm{V}}^\bullet\CH_0(X)$, was proposed by Voisin in \cite{V4}, and is defined by
$$F^i_{\mathrm{V}}\CH_0(X) \ = \ \bigcap_{\Gamma, Y}\ker\big(  \Gamma_*: \CH_0(X)\longrightarrow \CH_0(Y) \big),$$
where $Y$ ranges over all smooth projective $(i-1)$-folds and $\Gamma$ over all correspondences in the group $\CH^{i-1}(X\times Y)$.
In Proposition~\ref{abprop} and in Proposition \ref{HKprop}, we adapt arguments of Voisin from \cite{V3} to deduce Main Conjecture \ref{mainconj}  for abelian varieties and for hyper-K\"ahler varieties with respect to the candidate filtration $F_{\mathrm{V}}^\bullet$ from well-known conjectures on algebraic cycles.
\medskip

The second candidate, which is particularly useful to provide unconditional evidence in favor of Main Conjecture~\ref{mainconj}, is inspired by Murre's filtration \cite{Murre2} and
uses the language of birational motives as introduced by Kahn--Sujatha \cite{KS}, see \cite[\S 2]{Vial-JMPA} for an overview. Briefly, a birational correspondence between two connected smooth projective varieties $X$ and $Y$ over a field $k$ is a cycle 
\[\gamma \in \lim_{U \subseteq X} \CH^{\dim Y}(U\times_k Y) = \CH_0(Y_{k(X)}),\] where the limit runs through all Zariski open subsets of $X$ and where $k(X)$ is the function field of~$X$. Birational correspondences can be composed and there is a well-defined induced action 
\[\gamma_*: \CH_0(X) \to \CH_0(Y),\]
which in fact determines $\gamma$ if $k$ is a universal domain \cite[Lem.~2.1]{Vial-JMPA}.
If $k$ is the field of complex numbers, then there is, for all integers $i$, a well-defined induced action 
$$\gamma^*\colon H^0(Y,\Omega_Y^i) \to H^0(X,\Omega_X^i).$$ 
A \emph{birational motive} is a pair $(X,\varpi)$, also denoted $\ho_{\varpi}(X)$,
 consisting of a smooth projective variety $X$ and a birational idempotent correspondence $\varpi \in \CH_0(X_{k(X)})$.
 When $\varpi$ is the generic point of $X$, namely the identity birational correspondence, we write simply $\ho(X)$ instead of $\ho_{\varpi}(X)$.
 \medskip

Let now $\varpi \in \End(\ho(X))$ be any birational idempotent correspondence on a smooth projective complex variety $X$ and denote by $\delta^{k-1} : X \hookrightarrow X^k$ the diagonal embedding, where by convention $\delta^{-1}$ is the structure morphism.
These induce a morphism  
$$\bigoplus_{k\geq 0} \varpi^{\otimes k}\circ \delta^{k-1}\colon \ho(X) \longrightarrow \operatorname{Sym}^*\ho_{\varpi}(X).$$
If this morphism is split injective, we say that $\ho(X)$ is \emph{co-generated} by $\ho_{\varpi}(X)$;
in that case, the algebra of global holomorphic forms $$H^0(X,\Omega_X^\bullet) = \bigoplus_{i\geq 0}H^0(X,\Omega_X^i)$$ is generated by the image of $\varpi^*$.
\medskip

If $\ho(X)$ is co-generated by $\ho_{\varpi}(X)$ and if the algebra of global holomorphic forms on $X$ is generated in degree $\leq d$,
 we define, for all $j$ and all $1\leq i \leq d$, the following descending filtration
$$F_\varpi^{dj+i} \CH_0(X) =_{\mathrm{def}} \ker \left( \bigoplus_{k=0}^j \varpi^{\otimes k} \circ \delta^{k-1}_* \ \colon\ \CH_0(X) \longrightarrow \bigoplus_{k=0}^j \CH_0(X^k)\right). $$
This filtration is in general finer than the conjectural Bloch--Beilinson filtration.
However, if $\varpi$ acts on $H^0(X,\Omega_X^{i})$ as zero for $i> d$, then $F_\varpi^{dj+1}\CH_0(X)$ coincides with $F_{\mathrm{BB}}^{dj+1}\CH_0(X)$ for all $j\geq 0$, provided the conjectural Bloch-Beilinson filtration $F_{\mathrm{BB}}^\bullet$ exists.
Moreover, if in addition $H^0(X,\Omega_X^\bullet)$ is generated in degree~$d$,
 then the above filtration is in fact a candidate for the Bloch--Beilinson filtration.
\medskip
	 
In this language, we have
\begin{prop}[Proposition~\ref{P:cogen2}]\label{P2:cogen2}
	Let $X$ be a smooth projective variety whose algebra of holomorphic forms is generated in degree $\leq d$.
  Assume that there exists a direct summand  $\ho_{\varpi}(X) =_{\mathrm{def}} (X,\varpi)$  of the birational motive $\ho(X)$ such that $\ho(X)$ is co-generated by $\ho_{\varpi}(X)$.
	If $x_1, \ldots, x_m$ and $y_1, \ldots , y_m$ are closed points of~$X$, then
	$$\sum_{i=1}^{m}[x_i]=\sum_{i=1}^{m} [y_i] \ \mbox{in}\ \CH_0(X) \ \iff \ \sum_{i=1}^{m}[x_i]=\sum_{i=1}^{m} [y_i] \ \mbox{in}\ \CH_0(X)/F_\varpi^{dm+1}\CH_0(X). $$
\end{prop}

The link to Main Conjecture \ref{mainconj} is provided by the following.
Assume that the algebra $H^0(X,\Omega_X^\bullet) $ is generated by $\bigoplus_{i\leq d}H^0(X,\Omega_X^i)$.  
A combination of the standard conjectures and of the Bloch--Beilinson conjecture implies the existence of a birational idempotent correspondence $\varpi$ such that $\varpi_* H^0(X,\Omega_X^\bullet) =   \bigoplus_{i\leq d}H^0(X,\Omega_X^i)$ and, for any choice of such $\varpi$, 
	$\ho(X)$ is co-generated by $\ho_{\varpi}(X)$ (this is \cite[Conj.~5.1]{Vial-JMPA}).
	Hence, provided the conjectural Bloch-Beilinson filtration $F_{\mathrm{BB}}^\bullet$ exists, we have that $F_\varpi^{dj+1}\CH_0(X)$ coincides with $F_{\mathrm{BB}}^{dj+1}\CH_0(X)$ for all $j\geq 0$, and one may thus formulate the following variant of Main Conjecture \ref{mainconj}:
	
	\begin{conjecture}\label{C}
		Let $X$ be a smooth projective variety whose algebra of holomorphic forms is generated in degree~$\leq d$. Then there exists a birational idempotent correspondence $\varpi$ such that  $\varpi_* H^0(X,\Omega_X^\bullet) = \bigoplus_{i\leq d}  H^0(X,\Omega_X^i)$ and such that, for
		$x_1, \ldots, x_m$ and $y_1, \ldots , y_m$ closed points on $X$, 
		$$\sum_{i=1}^{m}[x_i]=\sum_{i=1}^{m} [y_i] \ \mbox{in}\ \CH_0(X) 
		\ \iff \
		 \sum_{i=1}^{m}[x_i]=\sum_{i=1}^{m} [y_i] \ \mbox{in}\ \CH_0(X)/F_\varpi^{dm+1}\CH_0(X). $$ 
	\end{conjecture}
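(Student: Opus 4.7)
The plan is to reduce Conjecture~\ref{C} to Proposition~\ref{P2:cogen2}, assuming the standard conjectures together with the Bloch--Beilinson conjecture. No new cycle-theoretic input should be required beyond what is already packaged in the discussion preceding the statement.

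First, I would invoke the combined consequence of the standard conjectures and of the Bloch--Beilinson conjecture which, by \cite[Conj.~5.1]{Vial-JMPA}, produces a birational idempotent correspondence $\varpi \in \CH_0(X_{k(X)})$ satisfying
\[\varpi_* H^0(X,\Omega_X^\bullet) \ = \ \bigoplus_{0<i\leq d} H^0(X,\Omega_X^i),\]
and simultaneously guarantees that $\ho(X)$ is co-generated by $\ho_{\varpi}(X)$, i.e., the morphism
\[\bigoplus_{k\geq 0} \varpi^{\otimes k} \circ \delta^{k-1}\colon \ho(X) \longrightarrow \operatorname{Sym}^* \ho_{\varpi}(X)\]
is split injective in the category of birational motives. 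The same framework identifies $F_\varpi^\bullet$ with the Bloch--Beilinson filtration on $\CH_0(X)$.

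The second and final step is to apply Proposition~\ref{P2:cogen2} to this $\varpi$: co-generation is precisely its hypothesis and its conclusion reads
\[\sum_{i=1}^{m}[x_i]=\sum_{i=1}^{m} [y_i] \ \mbox{in}\ \CH_0(X) \ \iff \ \sum_{i=1}^{m}[x_i]=\sum_{i=1}^{m} [y_i] \ \mbox{in}\ \CH_0(X)/F_\varpi^{dm+1}\CH_0(X),\]
which is exactly the equivalence asserted by Conjecture~\ref{C}.

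The main obstacle is, of course, the unconditional construction of $\varpi$ together with the co-generation of $\ho(X)$ by $\ho_{\varpi}(X)$: this is the content of \cite[Conj.~5.1]{Vial-JMPA} and cannot be circumvented in general. Accordingly, the argument above should be understood as a conditional reduction, and it becomes an unconditional proof precisely in those classes of varieties -- such as abelian varieties and the hyper-K\"ahler moduli spaces treated in Theorems~\ref{T2:MZ}--\ref{T2:T} -- where $\varpi$ can be exhibited explicitly and co-generation verified by geometric means.
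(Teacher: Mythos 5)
Your proposal matches the paper's own reasoning exactly. Conjecture~\ref{C} is formulated precisely so that it reduces, conditionally, to Proposition~\ref{P2:cogen2}: the discussion immediately preceding it in the paper invokes the standard conjectures together with the Bloch--Beilinson conjecture to guarantee (via \cite[Conj.~5.1]{Vial-JMPA}) the existence of a birational idempotent $\varpi$ with $\varpi_* H^0(X,\Omega_X^\bullet) = \bigoplus_{0<i\leq d} H^0(X,\Omega_X^i)$ and the co-generation of $\ho(X)$ by $\ho_\varpi(X)$, at which point Proposition~\ref{P2:cogen2} yields the stated equivalence. You correctly flag that this is a conditional reduction rather than an unconditional proof, which is consistent with the statement being a conjecture; the cases where it becomes unconditional are exactly those handled in Theorem~\ref{HKevidence}.
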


Recall that the algebra of global holomorphic forms on a hyper-K\"ahler variety is generated in degree~2.
The following gives evidence for Conjecture~\ref{C} and therefore Main Conjecture \ref{mainconj} in the case of hyper-K\"ahler varieties.
	
	\begin{theorem}[Theorem~\ref{HKevidence}]
		Let $X$ be a hyper-K\"ahler variety. Assume that $X$ is one of the following:
		\begin{enumerate}
			\item \label{hilb} $\mathrm{Hilb}^n(S)$, the Hilbert scheme of length-$n$ closed subschemes on a K3 surface $S$\,;
			\item \label{moduli} $\mathrm{M}_\sigma(v)$, a moduli space of stable objects on a K3 surface\,;
			\item \label{kum} $K_n(A)$, the generalized Kummer variety associated to an abelian surface $A$\,;
			\item \label{fano} $F(Y)$, the Fano variety of lines on a smooth cubic fourfold $Y$\,;
			\item \label{OG6} $\widetilde{K}_v(A)$, O'Grady's six-dimensional example.
		\end{enumerate}
	Then there exists a birational idempotent correspondence $\varpi$ such that $\ho(X)$ is co-generated by $\ho_{\varpi}(X)$,   $\varpi_* H^0(X,\Omega^\bullet_X) = H^0(X,\Omega_X^2)$, and such that,
		for any closed points $x_1, \ldots, x_m$ and $y_1, \ldots , y_m$  on $X$, 
		$$\sum_{i=1}^{m}[x_i]=\sum_{i=1}^{m} [y_i] \ \mbox{in}\ \CH_0(X)
		 \ \iff \ 
		 \sum_{i=1}^{m}[x_i]=\sum_{i=1}^{m} [y_i] \ \mbox{in}\ \CH_0(X)/F_\varpi^{2m+1}\CH_0(X). $$
	\end{theorem}
	
From there and the work carried out in ~\cite{Vial-JMPA}, we derive explicit criteria as in Theorem~\ref{T2:T} for rational equivalence of effective zero-cycles; we refer to Theorem~\ref{T} for case~\eqref{moduli}, Theorem~\ref{T2} for case~\eqref{kum} and Theorem~\ref{T3} for case~\eqref{fano}.

\subsection{Polynomial decomposition of the diagonal}In the last part of the paper, in an attempt to further explain and motivate Main Conjecture \ref{mainconj}, we introduce in Definition~\ref{polydef}  the notion of polynomial decomposition of the diagonal up to coniveau $c$. A special instance of the definition is the following:
\begin{definition}
A smooth projective $n$-fold $X$ admits a 				degree $l$ polynomial decomposition of the diagonal if
$$\Delta_X=Z_1+Z_2\in \textup{CH}^{n}(X\times X),$$
where $Z_1$ belongs to the subalgebra of $\textup{CH}^{\bullet}(X\times X)$ generated in degree $\leq l$ and $Z_2$ is supported on $D\times X$ for some divisor $D\subset X$.
\end{definition}

In Proposition~\ref{P:poldecalg}, we observe that if $X$ has a degree $l$ polynomial decomposition of the diagonal, then its algebra of holomorphic forms is generated in degrees $\leq l$.
We show in Proposition~\ref{P:polNorimain} that if $X$ has a degree $l$ polynomial decomposition of the diagonal and satisfies Nori's Conjecture \ref{noriconj}, then for $x_1,\ldots, x_m,y_1,\ldots, y_m\in X,$
\begin{equation*}\label{equalitymod}\sum_{i=1}^m x_i \ = \ \sum_{i=1}^m y_i  \ \mbox{in} \ \textup{CH}_0(X) \ \ \iff \ \ \sum_{i=1}^m x_i=\sum_{i=1}^m y_i  \ \mbox{in} \  \textup{CH}_0(X)/F_{\mathrm{V}}^{ml+1}\textup{CH}_0(X).\end{equation*}
Accordingly, the following conjecture in conjunction with Nori's Conjecture \ref{noriconj} implies Main Conjecture~\ref{mainconj}:

\begin{conjecture}[Special instance of Conjecture~\ref{polyconj}]\label{conj:c1}
	Let $l$ be a positive integer.
A smooth projective variety $X$ admits a degree $l$ polynomial decomposition of the diagonal if and only if the algebra $H^{0}(X,\Omega^\bullet)$ is generated in degree $\leq l$.
\end{conjecture}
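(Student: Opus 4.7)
The forward implication is Proposition~\ref{P:poldecalg}, so the task reduces to the converse: assuming that $H^0(X,\Omega^\bullet)$ is generated in degrees $\leq l$, one must exhibit cycles $Z_1, Z_2 \in \CH^n(X\times X)$ with $Z_1$ a polynomial in correspondences of degree $\leq l$, with $Z_2$ supported on $D\times X$ for some divisor $D\subset X$, and such that $\Delta_X = Z_1 + Z_2$.

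The natural strategy is a two-step Bloch--Srinivas-type argument. First, one would translate the generation hypothesis into a cohomological polynomial decomposition of the K\"unneth components of $[\Delta_X]$. By Hodge symmetry combined with the Hodge decomposition, the assumption that $\bigoplus_i H^0(X,\Omega^i)$ is generated by its part of degree $\leq l$ should imply that the transcendental part of $H^*(X,\mathbb{C})$ --- that is, the quotient of $H^*(X,\mathbb{C})$ by classes supported on a sufficiently ample divisor $D\subset X$ --- is generated, as a sub-Hodge structure of a tensor algebra, by cup products of classes from $H^i(X,\mathbb{C})$ with $i\leq l$. Expressing the K\"unneth projectors in these terms then yields a cohomological identity of the required form.

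Second, one would promote this cohomological identity to an identity in the Chow ring $\CH^\bullet(X\times X)$. The natural input here is Nori's Conjecture~\ref{noriconj}: a cycle whose image in $H^{2n}(X\times X)/H^{2n}(D\times X)$ is polynomial in the classes of low-degree correspondences should itself admit such a polynomial expression in $\CH^n(X\times X)$ modulo a cycle supported on $D\times X$. Combined with Step~1, this would yield the desired decomposition.

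\textbf{Main obstacle.} The essential difficulty lies entirely in the second step: the descent from cohomology to Chow groups is a generalized Bloch-type statement and is wide open in essentially all nontrivial cases --- already for surfaces with $p_g=0$ it contains Bloch's conjecture. I therefore expect that, absent a proof of Nori's Conjecture, the converse direction is accessible only in specific classes of varieties admitting an explicit Chow--K\"unneth decomposition of Murre type whose projectors visibly lie in the subalgebra generated in low degree (abelian varieties, the hyper-K\"ahler manifolds of Theorem~\ref{HKevidence}, varieties whose cohomology is spanned by algebraic cycle classes, and similar), and will otherwise remain open alongside the Bloch--Beilinson conjecture itself.
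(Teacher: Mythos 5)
This statement is a conjecture, not a theorem, and the paper does not prove it; it only proves the ``only if'' direction (via Proposition~\ref{P:poldecalg}, which you correctly identify as giving the forward implication, applied with $c=1$ and $r=0$), supplies supporting examples in Examples~\ref{ex1}, and shows in Proposition~\ref{impliesgenbloch} that the conjecture implies the generalized Bloch conjecture in coniveau~1. Your assessment that the converse direction is open and comparable in depth to Bloch-type conjectures is therefore exactly consistent with the paper: the authors themselves use Proposition~\ref{impliesgenbloch} to demonstrate that Conjecture~\ref{conj:c1} (with $c=1$) implies the generalized Bloch conjecture, so any unconditional proof of the converse would be a major result.

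One technical caveat about the ``proof plan'' itself. Your Step~2 proposes to descend a cohomological polynomial decomposition to the Chow ring by invoking Nori's Conjecture~\ref{noriconj}. Nori's Conjecture, as used in the paper (cf.\ Lemma~\ref{corresp}), is a \emph{detection} statement: a zero-cycle in $\CH^i(X)$ that restricts to zero on every $i$-dimensional subvariety must vanish. It does not by itself provide a mechanism for \emph{lifting} a cohomological identity among Künneth projectors to an identity in $\CH^n(X\times X)$ modulo cycles supported on $D\times X$. What you would actually need for that descent is the full Bloch--Beilinson--Murre package: the existence of a Chow--Künneth decomposition together with the conjectured injectivity of the cycle class map on the appropriate graded pieces (so that vanishing in cohomology implies vanishing in Chow, for cycles lying in suitable pieces of the filtration). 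Nori's Conjecture is related but strictly weaker in this respect, and the Bloch--Srinivas-style argument you sketch would not close without assuming something in the direction of Murre's conjectures or equivalent. This does not change your final conclusion --- the converse is open --- but it sharpens the diagnosis of why.
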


Finally, we will show in Proposition \ref{impliesgenbloch} how Conjecture \ref{conj:c1} easily implies the generalized Bloch conjecture in coniveau $1$.

\subsection*{Acknowledgments}
We thank Salvatore Floccari for mentioning Remark~\ref{OG6-R=S} and its proof, which led to a proof of Theorem~\ref{HKevidence}\eqref{OG6}. 
We also thank the referee for his detailed comments.
Ch.V.\ is grateful to Stony Brook University for a pleasant stay in March 2023.

\section{Effective zero-cycles on hyper-K\"ahler varieties}
In this section we show that Main Conjecture \ref{mainconj} holds unconditionally for certain hyper-K\"ahler varieties, with respect to a certain Bloch--Beilinson candidate filtration induced by some birational correspondence. 
We also give explicit criteria for the rational equivalence of effective zero-cycles on some hyper-K\"ahler varieties.

\subsection{Hyper-K\"ahler varieties satisfying Main Conjecture \ref{mainconj}}
We start by considering the general situation of an arbitrary smooth projective variety $X$ over an algebraically closed field. We take on the definitions and notation from \cite{Vial-JMPA} concerning birational motives and their co-algebra structure. 
Our main tool used to prove Theorem~\ref{T2:T} and its variants for generalized Kummer varieties and Fano varieties of lines on smooth cubic fourfolds is the observation that \cite[Prop.~5.2]{Vial-JMPA} can be extended to the following:

	\begin{prop}\label{P:cogen}
		Let $X$ be a smooth projective variety over an algebraically closed field and denote $\delta^{k-1} : X \hookrightarrow X^k$ the diagonal embedding, where by convention $\delta^{-1}$ is the structure morphism. 
		Let $\ho_{\varpi}(X) =_{\mathrm{def}} (X,\varpi)$ be a direct summand of the birational motive $\ho(X)$ and assume that there exists $r\geq 0$ such that the morphism $$\bigoplus_{k=0}^r \varpi^{\otimes k} \circ \delta^{k-1}_* : \ \ho(X) \longrightarrow \operatorname{Sym}^{\leq r} \ho_{\varpi}(X)$$ is split injective (we say $\ho(X)$ is \emph{co-generated} by $\ho_{\varpi}(X)$ in degree $\leq r$).
		
		If $x_1, \ldots, x_m$ and $y_1, \ldots , y_m$ are closed points on $X$, then
		$$\sum_{i=1}^{m}[x_i]=\sum_{i=1}^{m} [y_i] \ \mbox{in}\ \CH_0(X) \iff \ \sum_{i=1}^{m} (\varpi_*[x_i])^{\times k} =\sum_{i=1}^{m} (\varpi_*[y_i])^{\times k} \ \mbox{in}\ \CH_0(X^k) \ \mbox{for all } k\leq \min(m,r).$$
	\end{prop}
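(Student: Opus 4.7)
The forward direction is direct: given $\sum_i [x_i] = \sum_i [y_i]$ in $\CH_0(X)$, one applies $\varpi^{\otimes k}\circ \delta^{k-1}_*$ for each $0\leq k\leq r$. Since $\delta^{k-1}_*[x] = [x]^{\times k}$, this yields $\sum_i (\varpi_*[x_i])^{\times k} = \sum_i (\varpi_*[y_i])^{\times k}$ in $\CH_0(X^k)$, and a fortiori for all $k\leq \min(m,r)$. For the reverse direction, the split injectivity of $\bigoplus_{k=0}^r \varpi^{\otimes k}\circ \delta^{k-1}_*$ reduces the task to upgrading the assumed equalities for $k\leq \min(m,r)$ to equalities in $\CH_0(X^k)$ for every $k\leq r$. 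When $m\geq r$ nothing is to be done, so the only case requiring work is $m<r$: I must show that the equalities $\sum_i (\varpi_*[x_i])^{\times k} = \sum_i (\varpi_*[y_i])^{\times k}$ for $k\leq m$ force the same equality for $m<k\leq r$.

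The main tool I plan to use is Newton's identities applied in the graded commutative $\Q$-algebra
\[
\mathcal{R} \ := \ \bigoplus_{k\geq 0} \CH_0(\Sym^k X),
\]
whose multiplication is induced by the sum-of-zero-cycles maps $\mu_{j,k}\colon \Sym^j X\times \Sym^k X\to \Sym^{j+k} X$. Setting $a_i := \varpi_*[x_i]$ and $b_i := \varpi_*[y_i]$ in $\CH_0(X)\subset\mathcal{R}$, pushforward along the quotient $\pi_k\colon X^k\to \Sym^k X$ sends $\sum_i a_i^{\times k}$ to the $k$-th power sum $p_k(a) := \sum_i a_i^k$ computed in $\mathcal{R}$, and similarly for $b$. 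Because only $m$ summands appear, Newton's identities, which are valid in any commutative $\Q$-algebra, express each $p_k(a)$ as a universal $\Q$-polynomial in $p_1(a),\ldots,p_m(a)$, and the same polynomial identity is satisfied by the $p_k(b)$; thus $p_j(a)=p_j(b)$ for $j\leq m$ propagates to $p_k(a)=p_k(b)$ in $\mathcal{R}$ for every $k$. Since the cycles $\sum_i a_i^{\times k}$ and $\sum_i b_i^{\times k}$ are $S_k$-invariant and $\pi_k^*\pi_{k*}=k!$ on the invariant part with rational coefficients, these equalities in $\mathcal{R}$ lift back to $\sum_i a_i^{\times k} = \sum_i b_i^{\times k}$ in $\CH_0(X^k)$ for all $k$; specializing to $m<k\leq r$ completes the propagation step and, combined with split injectivity, delivers $\sum_i [x_i] = \sum_i [y_i]$ in $\CH_0(X)$.

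The main point I expect to be slightly delicate is the bookkeeping: one must verify carefully that $\mathcal{R}$ is a commutative $\Q$-algebra in the stated sense and that pushforward by $\pi_k$ faithfully records external power sums on $S_k$-invariants. Both are standard, the first being immediate from the definition of the maps $\mu_{j,k}$ and the second from the identity $\pi_k^*\pi_{k*}=k!$ on invariants with rational coefficients.
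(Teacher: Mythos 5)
Your proof is correct and follows the paper's route: split injectivity reduces everything to equalities of external power-sum cycles in $\CH_0(X^k)$ for $k\leq r$, and a symmetric-functions argument then cuts the range down to $k\leq\min(m,r)$. You make the latter step rigorous (the paper simply invokes ``the fundamental theorem on symmetric polynomials'') by working in the commutative $\Q$-algebra $\mathcal R=\bigoplus_k\CH_0(\operatorname{Sym}^kX)$, applying Newton's identities there, and returning to $\CH_0(X^k)$ via the standard isomorphism $\CH_0(\operatorname{Sym}^kX)_\Q\cong\CH_0(X^k)^{S_k}_\Q$ (note that $\pi_k$ is finite but not flat, so ``$\pi_k^*$'' should be understood as the transfer inverse to $\pi_{k*}$ on invariants, or one may simply invoke the injectivity of $\pi_{k*}$ on $S_k$-invariants).
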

	\begin{proof}  Under the morphism $\ho(X) \rightarrow \operatorname{Sym}^{\leq r} \ho_{\varpi}(X)$, the class of a closed point $x$ is mapped to $1 + (\varpi)_*[x] +\cdots + (\varpi^{\otimes r})_* \delta^{r-1}_*[x] $. 
		Since $\delta^{k-1}_*[x] = [x]\times \cdots \times [x] \ \mbox{in}\ \CH_0(X^{k})$, we find that
		$$\sum_{i=1}^{m} [x_i] \mapsto m + \sum_{i=1}^{m} \varpi_*[x_i]+ \cdots + \sum_{i=1}^{m} (\varpi_*[x_i])^{\times r}  .$$
		Now, the basic point is that if the morphism $\ho(X) \rightarrow \operatorname{Sym}^{\leq r} \ho_\varpi(X)$ is split injective, then the induced map on Chow groups of zero-cycles is injective. Hence 
		$$\sum_{i=1}^{m}[x_i]=\sum_{i=1}^{m} [y_i] \ \mbox{in}\ \CH_0(X) \ \iff \ \sum_{i=1}^{m} (\varpi_*[x_i])^{\times k} =\sum_{i=1}^{m} (\varpi_*[y_i])^{\times k} \ \mbox{in}\ \CH_0(X^k) \ \mbox{for all } k\leq r.$$
		On the other hand, 
		the fundamental theorem on symmetric polynomials provides the equivalence
		$$\sum_{i=1}^{m} (\varpi_*[x_i])^{\times k} =\sum_{i=1}^{m} (\varpi_*[y_i])^{\times k} \ \forall k\leq r	\iff \ \sum_{i=1}^{m} (\varpi_*[x_i])^{\times k} =\sum_{i=1}^{m} (\varpi_*[y_i])^{\times k}  \ \  \mbox{for all}\ k \leq \min(m,r).$$
		This concludes the proof.
	\end{proof}

If $\ho(X)$ is co-generated by $\ho_{\varpi}(X)$ and if the algebra of global holomorphic form on $X$ is generated in degree $\leq d$, 
recall that for all $j$ and all $1\leq i \leq d$ we consider the descending filtration
	$$F_\varpi^{dj+i} \CH_0(X) =_{\mathrm{def}} \ker \Big( \bigoplus_{k=0}^j \varpi^{\otimes k} \circ \delta^{k-1}_* \ \colon\ \CH_0(X) \longrightarrow \bigoplus_{k=0}^j \CH_0(X^k)\Big),$$
which is finer than the conjectural Bloch--Beilinson filtration.
	Proposition~\ref{P:cogen} admits the following straightforward consequence:
	
	\begin{prop}\label{P:cogen2}
		Let $X$ be a smooth projective variety
		whose algebra of global holomorphic forms is generated in degree $\leq d$. Let $\ho_{\varpi}(X) =_{\mathrm{def}} (X,\varpi)$ be a direct summand of the birational motive $\ho(X)$ and assume that $\ho(X)$ is co-generated by $\ho_{\varpi}(X)$.
		If $x_1, \ldots, x_m$ and $y_1, \ldots , y_m$ are closed points on $X$, then
		$$\sum_{i=1}^{m}[x_i]=\sum_{i=1}^{m} [y_i] \ \mbox{in}\ \CH_0(X) \ \iff \ \sum_{i=1}^{m}[x_i]=\sum_{i=1}^{m} [y_i] \ \mbox{in}\ \CH_0(X)/F_\varpi^{dm+1}\CH_0(X). $$
	\end{prop}

	Recall that the algebra of global holomorphic forms on a hyper-K\"ahler variety is generated in degree~2.
	The following gives evidence for Conjecture~\ref{C} and Main Conjecture \ref{mainconj} in the case of hyper-K\"ahler varieties.\newpage

	\begin{theorem}\label{HKevidence}
		Let $X$ be a hyper-K\"ahler variety. Assume that $X$ is one of the following:
		\begin{enumerate}
			\item $\mathrm{Hilb}^n(S)$, the Hilbert scheme of length-$n$ closed subschemes on a K3 surface $S$\,;
			\item $\mathrm{M}_\sigma(v)$, a moduli space of stable objects on a K3 surface\,;
			\item  $K_n(A)$, the generalized Kummer variety associated to an abelian surface $A$\,;
			\item $F(Y)$, the Fano variety of lines on a smooth cubic fourfold $Y$\,;
			\item  $\widetilde{K}_v(A)$, O'Grady's six-dimensional example~\cite{OG6}.
		\end{enumerate}
		Then there exists a birational idempotent correspondence $\varpi$ such that $\ho(X)$ is co-generated by $\ho_{\varpi}(X)$,   $\varpi_* H^0(X,\Omega^\bullet_X) = H^0(X,\Omega_X^2)$, and such that,
		for any closed points $x_1, \ldots, x_m$ and $y_1, \ldots , y_m$  on $X$, 
		$$\sum_{i=1}^{m}[x_i]=\sum_{i=1}^{m} [y_i] \ \mbox{in}\ \CH_0(X) \ \iff \ \sum_{i=1}^{m}[x_i]=\sum_{i=1}^{m} [y_i] \ \mbox{in}\ \CH_0(X)/F_\varpi^{2m+1}\CH_0(X). $$
	\end{theorem}

		\begin{proof}
		For a hyper-K\"ahler variety $X$, the existence of a birational idempotent correspondence $\varpi$ such that  $\varpi_*H^0(X,\Omega_X^\bullet) = H^0(X,\Omega_X^2)$ and such that
		the morphism $\ho(X) \to \Sym^{\leq n} \ho_{\varpi}(X)$ is an isomorphism (in particular, $\ho(X)$ is co-generated by $\ho_{\varpi}(X)$) is conjectured in \cite[Conj.~2]{Vial-JMPA} (see also \cite[Prop.~5.3]{Vial-JMPA}) and is established in cases \eqref{hilb}, \eqref{moduli}, \eqref{kum}, and \eqref{fano} in \cite[Thm.~5.5]{Vial-JMPA}. 
		Regarding case~\eqref{OG6}, we use below the work of Mongardi--Rapagnetta--Sacc\`a~\cite{MRS} and Floccari~\cite{Floccari} to reduce to case~\eqref{hilb} by showing that the birational motive of $\widetilde{K}_v(A)$ is isomorphic to that of a Hilbert scheme of length-3 closed subschemes on a K3 surface as co-algebra objects. 
		The theorem in all cases listed then follows from Proposition~\ref{P:cogen2}.
		
		O'Grady's hyper-K\"ahler sixfold is obtained as follows. Let $A$ be an abelian surface and let $v=2v_0$ be a Mukai vector on $A$ such that $v_0$ is primitive, effective, and of square 2. Given a $v$-generic polarization $H$, let $\mathrm{M}_v(A)$ be the corresponding moduli space of $H$-semistable sheaves on $A$ and denote $K_v(A)$ its Albanese fiber. 
		Then $K_v(A)$ admits a crepant resolution $\widetilde{K}_v(A)$ which is a hyper-K\"ahler sixfold. 
		By \cite{MRS}, there exists a variety $Y_v(A)$, which by \cite[Prop.~3.3]{Floccari} is birational to a moduli space $\mathrm{M}(A,v)$ of stable sheaves on the Kummer surface associated to $A$,  and a rational map $f: Y_v(A) \dashrightarrow \widetilde{K}_v(A)$ of degree 2. 
		From  the results of \cite{Floccari}, $f_* \colon  \CH_0(Y_v(A)) \to \CH_0(\widetilde{K}_v(A))$ is an isomorphism, and consequently by \cite[Prop.~2.3]{Vial-JMPA} gives an isomorphism $ \ho(\mathrm{M}(A,v)) \cong \ho(\widetilde{K}_v(A))$ of co-algebra objects. 
	\end{proof}
	
	\begin{remark}[The Voisin filtration $S_\bullet$ and the co-radical filtration agree in case~\eqref{OG6}] \label{OG6-R=S} 
		The proof of Theorem~\ref{HKevidence} in 	Case~\eqref{OG6} was made possible after Salvatore Floccari communicated to us the following.
		Let $\widetilde{K}_v(A)$ be O'Grady's six-dimensional example. Together with \cite[Thm.~1$(i)$]{Vial-JMPA}, the arguments in the proof of Theorem~\ref{HKevidence} in that case show that 
		there exists a point $o\in \widetilde{K}_v(A)$ such that, for all $k\geq 0$ and for all $x\in \widetilde{K}_v(A)$, 
		\begin{equation*}
		[x]\in S_k\CH_0(\widetilde{K}_v(A)) \ \iff \ ([x]-[o])^{\times (k+1)} = 0 \ \mbox{in}\ \CH_0(\widetilde{K}_v(A)^{k+1}),
		\end{equation*}
		or equivalently, such that 
		$$S_k\CH_0(\widetilde{K}_v(A)) = R_k\CH_0(\widetilde{K}_v(A))$$
		for all $k\geq 0$. 
		Here, $S_\bullet$ is Voisin's filtration~\cite{V2} and $R_\bullet$ is the co-radical filtration introduced in~\cite[Def.~6.1]{Vial-JMPA} and in~\cite[(33)]{BFMS}.
	\end{remark}

\subsection{Explicit criteria for rational equivalence of effective zero-cycles on hyper-K\"ahler varieties}
In this paragraph we exploit the so-called \emph{strictness} of the explicit co-multiplicative birational Chow--K\"unneth decompositions on the birational motive of certain hyper-K\"ahler varieties, which were constructed in \cite{Vial-JMPA},
 to derive criteria for the coincidence of effective zero-cycles on these hyper-K\"ahler varieties.

\subsubsection{Moduli spaces of stable objects on K3 surfaces}
	Let $S$ be a smooth projective complex K3 surface. For a primitive $v \in H^\bullet(S,\Z)$, and a $v$-generic stability
	condition $\sigma$, let $\mathrm{M}_\sigma(v)$ be the moduli space of $\sigma$-stable complexes on $S$ of Mukai vector~$v$\,; this defines a hyper-K\"ahler variety and we denote by $2n$ its dimension. 
	Marian and Zhao~\cite{MZ} have established the following:
	
	\begin{theorem}[Marian--Zhao \cite{MZ}]\label{MZ}
		Let $\mathcal{F}$ and $\mathcal{G}$ be closed points of $\mathrm{M}_\sigma(v)$. Then
		\begin{align*}
		[\mathcal{F}]= [\mathcal{G}] \ \mbox{in}\ \CH_0(\mathrm{M}_\sigma(v)) 
		\iff   c_2(\mathcal{F}) = c_2(\mathcal{G})\ \mbox{in}\ \CH_0(S).
		\end{align*}
	\end{theorem}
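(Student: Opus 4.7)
The plan is to deduce the theorem from the co-generation of the birational motive of $\mathrm{M}_\sigma(v)$ established in Theorem~\ref{HKevidence}, case \ref{moduli}, applied through Proposition~\ref{P:cogen} with $m=1$, combined with an explicit description of the birational idempotent $\varpi$ in terms of the universal family.

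The implication $[\mathcal{F}]=[\mathcal{G}] \Rightarrow c_2(\mathcal{F})=c_2(\mathcal{G})$ is formal: a (quasi-)universal family $\mathcal{E}$ on $S \times \mathrm{M}_\sigma(v)$ provides a correspondence $c_2(\mathcal{E}) \in \CH^2(S \times \mathrm{M}_\sigma(v))$ whose action on Chow groups of zero-cycles sends the class $[\mathcal{F}]$ of a closed point to $c_2(\mathcal{F}) \in \CH_0(S)$. Since correspondences factor through rational equivalence, the forward implication follows.

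For the converse, I would invoke Proposition~\ref{P:cogen} in the case $m=1$. By Theorem~\ref{HKevidence}, case \ref{moduli}, there is a birational idempotent $\varpi \in \End(\ho(\mathrm{M}_\sigma(v)))$ such that $\ho(\mathrm{M}_\sigma(v))$ is co-generated by $\ho_\varpi(\mathrm{M}_\sigma(v))$, and hence
\[ [\mathcal{F}] = [\mathcal{G}] \ \text{in} \ \CH_0(\mathrm{M}_\sigma(v)) \ \iff \ \varpi_*[\mathcal{F}] = \varpi_*[\mathcal{G}] \ \text{in} \ \CH_0(\mathrm{M}_\sigma(v)). \]

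The main obstacle is to show that $\varpi_*[\mathcal{F}]$ depends on $\mathcal{F}$ only through $c_2(\mathcal{F}) \in \CH_0(S)$. This should come out of the explicit construction of $\varpi$ carried out in \cite{Vial-JMPA}: the idempotent is built from Chern classes of the universal family, and one expects to produce an auxiliary correspondence $\Gamma \in \CH^{2n}(S \times \mathrm{M}_\sigma(v))$ with $\varpi_* = \Gamma_* \circ c_2(\mathcal{E})_*$ on $\CH_0(\mathrm{M}_\sigma(v))$. Given such a factorization, $c_2(\mathcal{F}) = c_2(\mathcal{G})$ in $\CH_0(S)$ forces $\varpi_*[\mathcal{F}] = \varpi_*[\mathcal{G}]$, and the converse implication follows.
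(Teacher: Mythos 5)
The paper does not prove Theorem~\ref{MZ}; it imports it as a black box from Marian--Zhao \cite{MZ}. Your proposal therefore cannot be compared against a proof in the paper, and, more importantly, the argument you sketch is circular. The key input you invoke --- the existence of a birational idempotent $\varpi$ on $\ho(\mathrm{M}_\sigma(v))$ factoring through $\ho_2(S)$ with $[\mathcal{F}] \mapsto c_2(\mathcal{F}) - c[o_S]$, and the co-generation statement that $\ho(\mathrm{M}_\sigma(v)) \to \mathrm{Sym}^{\leq n}\ho_\varpi(\mathrm{M}_\sigma(v))$ is split injective --- is established in \cite[Thm.~3.1, Thm.~5.5]{Vial-JMPA} \emph{using} the Marian--Zhao theorem as its central ingredient. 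Indeed, the present paper states this explicitly at the end of the proof of Theorem~\ref{T}: ``See the proofs of \cite[Thm.~3.1]{Vial-JMPA} and \cite[Thm.~5.5]{Vial-JMPA}, which are based on the theorem of Marian--Zhao.'' So deducing Theorem~\ref{MZ} from Theorem~\ref{HKevidence}~\eqref{moduli} via Proposition~\ref{P:cogen} reverses the actual chain of implication.

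The easy direction of your proposal (the implication $[\mathcal{F}]=[\mathcal{G}] \Rightarrow c_2(\mathcal{F})=c_2(\mathcal{G})$ via the correspondence $c_2(\mathcal{E})$ of a quasi-universal family) is fine and standard. But the nontrivial converse requires substantially different ideas: Marian--Zhao's original proof rests on O'Grady's structural results for Chow groups of zero-cycles on moduli spaces of sheaves on K3 surfaces \cite{OG-moduli} and the incidence/second-Chern-class filtration of Shen--Yin--Zhao \cite{SYZ}, none of which you can recover from the motivic co-generation statement without presupposing the conclusion. If you want a self-contained argument, you would need to reproduce (at least the relevant parts of) that machinery rather than appeal to Proposition~\ref{P:cogen}.
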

	
	We have the following generalization to effective zero-cycles of arbitrary degree:

	\begin{theorem} \label{T}
		Let $\mathcal{F}_1, \ldots, \mathcal{F}_m$ and $\mathcal{G}_1,\ldots, \mathcal{G}_m$ be closed points of $\mathrm{M}_\sigma(v)$. Then
		\begin{align*}
		&	\sum_{i=1}^{m}[\mathcal{F}_i]=\sum_{i=1}^{m} [\mathcal{G}_i] \ \mbox{in}\ \CH_0(\mathrm{M}_\sigma(v)) \\
		& \iff  \sum_{i=1}^{m} c_2(\mathcal{F}_i)^{\times k} =\sum_{i=1}^{m} c_2(\mathcal{G}_i)^{\times k} \ \mbox{in}\ \CH_0(S^k) \ \mbox{for all } k\leq \min (m,n).
		\end{align*}
	\end{theorem}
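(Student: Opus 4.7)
The plan is to combine Proposition~\ref{P:cogen} with the co-generation statement for moduli spaces of stable sheaves on K3 surfaces from Theorem~\ref{HKevidence}\eqref{moduli}, and then translate the resulting criterion from $\CH_0(X^k)$ to the criterion in $\CH_0(S^k)$ claimed in Theorem~\ref{T} using the explicit realization of the projector $\varpi$ via the universal sheaf.

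First I would set $X := \mathrm{M}_\sigma(v)$ and invoke Theorem~\ref{HKevidence}\eqref{moduli} to obtain a birational idempotent $\varpi \in \End(\ho(X))$ with $\varpi_* H^0(X,\Omega^\bullet_X) = H^0(X,\Omega_X^2)$ such that the morphism $\ho(X) \to \Sym^{\leq n} \ho_\varpi(X)$ is split injective (in fact an isomorphism, by the proof of that theorem). Applying Proposition~\ref{P:cogen} with $r = n$ then yields immediately
$$\sum_{i=1}^m [\mathcal{F}_i] = \sum_{i=1}^m [\mathcal{G}_i] \ \mbox{in}\ \CH_0(X) \ \iff\ \sum_{i=1}^m (\varpi_*[\mathcal{F}_i])^{\times k} = \sum_{i=1}^m (\varpi_*[\mathcal{G}_i])^{\times k} \ \mbox{in}\ \CH_0(X^k)$$
for all $k \leq \min(m,n)$.

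Next, to convert this criterion from $\CH_0(X^k)$ into a criterion in $\CH_0(S^k)$, I would use that the projector $\varpi$ constructed in \cite{Vial-JMPA} comes equipped with a correspondence inducing an isomorphism $\ho_\varpi(X) \cong \ho(S)$ of birational motives, realized through (the second Chern class of) a quasi-universal sheaf on $X \times S$. Under this isomorphism $\varpi_*[\mathcal{F}] \in \CH_0(X)$ corresponds to $c_2(\mathcal{F}) \in \CH_0(S)$; indeed, the $m = 1$ case of the equivalence above must then recover Marian--Zhao's criterion (Theorem~\ref{MZ}), which pins down the identification. Passing to $k$-fold tensor products of the isomorphism $\ho_\varpi(X) \cong \ho(S)$, the equality in $\CH_0(X^k)$ translates to $\sum_{i=1}^m c_2(\mathcal{F}_i)^{\times k} = \sum_{i=1}^m c_2(\mathcal{G}_i)^{\times k}$ in $\CH_0(S^k)$, as required.

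The main obstacle is the explicit identification $\varpi_*[\mathcal{F}] \leftrightarrow c_2(\mathcal{F})$ and its compatibility with tensor powers: this requires unpacking the construction of $\varpi$ in terms of the Chern character of the (quasi-)universal sheaf as set up in \cite{Vial-JMPA} and \cite{MZ}. Once this identification is in hand, the passage from the base case $m = 1$ of Marian--Zhao to arbitrary $m$ is handled cleanly by the symmetric-power (co-algebra) formalism encoded in Proposition~\ref{P:cogen}, with the fundamental theorem on symmetric polynomials explaining why it suffices to test power-sums for $k \leq \min(m,n)$.
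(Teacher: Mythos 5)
Your plan follows the paper's proof essentially step by step: invoke Proposition~\ref{P:cogen} (via the co-generation statement proved in \cite{Vial-JMPA} and quoted in Theorem~\ref{HKevidence}\eqref{moduli}), then translate the resulting criterion on $\CH_0(X^k)$ into one on $\CH_0(S^k)$ using the explicit realization of $\varpi$ coming from the Marian--Zhao theorem. That is exactly the structure of the proof in the paper.

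One imprecision in the translation step should be flagged, as it hides a non-trivial last move. The isomorphism furnished by \cite{Vial-JMPA} identifies $\ho_\varpi(X)$ with the \emph{reduced} birational motive $\ho_2(S)$ (the image of $(\Delta_S - S\times[o_S])|_{\eta_S\times S}$), not with all of $\ho(S)$; accordingly $\varpi_*[\mathcal F]$ corresponds to the degree-zero cycle $c_2(\mathcal F)-c[o_S]$, where $c=\deg c_2(\mathcal F)$ and $[o_S]$ is the Beauville--Voisin class, rather than to $c_2(\mathcal F)$ itself. Your sanity check against Marian--Zhao at $m=1$ does not see this, since $c_2(\mathcal F)=c_2(\mathcal G)$ iff $c_2(\mathcal F)-c[o_S]=c_2(\mathcal G)-c[o_S]$. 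With the correct identification, Proposition~\ref{P:cogen} yields the criterion $\sum_i(c_2(\mathcal F_i)-c[o_S])^{\times k}=\sum_i(c_2(\mathcal G_i)-c[o_S])^{\times k}$ in $\CH_0(S^k)$, and one still needs to observe (by expanding the external powers and descending on $k$) that this system of equalities for $k\le\min(m,n)$ is equivalent to the stated system $\sum_i c_2(\mathcal F_i)^{\times k}=\sum_i c_2(\mathcal G_i)^{\times k}$; this is the concluding line of the paper's proof. Aside from that missing normalization and final reduction, your argument is correct and matches the paper's.
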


	\begin{proof}
		Denote by $[o_S]$ the Beauville--Voisin class on $S$ and let $c\in \mathbb Z$ be the constant (which depends only on the Mukai vector $v$) such that $\deg c_2(\mathcal F) = c$ for all $\mathcal F \in \mathrm{M}_\sigma(v)$. We denote $\ho_2(S)$ the image of the birational idempotent correspondence $(\Delta_S -  S\times [o_S])|_{\eta_S \times S}$ acting on $\ho(S)$.
		The birational motive $\ho(\mathrm{M}_\sigma(v))$ is then canonically isomorphic to $\operatorname{Sym}^{\leq n}\ho_\varpi(\mathrm{M}_\sigma(v))$ for a birational idempotent correspondence $\varpi$ factorizing as $\varpi : \ho(\mathrm{M}_\sigma(v)) \twoheadrightarrow \ho_2(S) \hookrightarrow \ho(\mathrm{M}_\sigma(v))$ with the left arrow satisfying $[\mathcal F] \mapsto c_2(\mathcal F) - c[o_S]$ for all $\mathcal F \in \mathrm{M}_\sigma(v)$. See the proofs of \cite[Thm.~3.1]{Vial-JMPA} and \cite[Thm.~5.5]{Vial-JMPA}, which are based on the theorem of Marian--Zhao.
		From Proposition~\ref{P:cogen} it follows that
		\begin{align*}
		&	\sum_{i=1}^{m}[\mathcal{F}_i]=\sum_{i=1}^{m} [\mathcal{G}_i] \ \mbox{in}\ \CH_0(\mathrm{M}_\sigma(v)) \\
		& \iff  \sum_{i=1}^{m} (c_2(\mathcal{F}_i)-c[o_S])^{\times k} =\sum_{i=1}^{m} (c_2(\mathcal{G}_i)-c[o_S])^{\times k} \ \mbox{in}\ \CH_0(S^k) \ \mbox{for all } k\leq \min(m,n).
		\end{align*}
		The latter is easily seen to be further equivalent to $\sum_{i=1}^{m} c_2(\mathcal{F}_i)^{\times k} =\sum_{i=1}^{m} c_2(\mathcal{G}_i)^{\times k} \ \mbox{in}\ \CH_0(S^k) $ for all $k\leq \min (m,n).$
	\end{proof}

	\subsubsection{Generalized Kummer varieties} Let $A$ be an abelian surface. Recall that the $2n$-dimensional generalized Kummer variety $K_n(A)$ is the fiber over $0$ of the composition of the Hilbert--Chow morphism with the sum map $\mathrm{Hilb}^{n+1}(A) \to A^{n+1}/\mathfrak{S}_{n+1} \to A$.
We thus have a natural morphism $K_n(A) \to A^{n+1}_0/\mathfrak{S}_{n+1}$, where $A^{n+1}_0 =_{\mathrm{def}} \ker (\Sigma : A^{n+1} \to A)$.
Let us denote by $\{x_1,\ldots,x_{n+1}\}$ the closed points of $A^{n+1}_0/\mathfrak{S}_{n+1}$; these are unordered $(n+1)$-tuple of closed point of $A$ such that $x_1 + \cdots + x_{n+1}=0$ in $A$. By \cite[\S 6]{ftv}, the pushforward map $\CH_0(K_n(A)) \to \CH_0( A^{n+1}_0/\mathfrak{S}_{n+1})$ is an isomorphism. Therefore the rational equivalence class of a point in $K_n(A)$ only depends on its image in $ A^{n+1}_0/\mathfrak{S}_{n+1}$ and we have a canonical isomorphism $\ho(K_n(A)) \cong \ho(A^{n+1}_0/\mathfrak{S}_{n+1})$.

	\begin{theorem} \label{T2}
		Let $p_1,\ldots,p_m$ and $q_1,\ldots, q_m$ be closed points of $K_n(A)$ with respective images \linebreak 
		$\{x_{1,1},\ldots,x_{1,n+1}\},\ldots, \{x_{m,1},\ldots,x_{m,n+1}\}$ and $\{y_{1,1},\ldots,y_{1,n+1}\},\ldots,\{y_{m,1},\ldots,y_{m,n+1}\}$  in $ A^{n+1}_0/\mathfrak{S}_{n+1}$. 
		Then
		\begin{align*}
		&	\sum_{i=1}^{m}[p_i]=\sum_{i=1}^{m} [q_i] \ \mbox{in}\ \CH_0(K_n(A)) \\
		& \iff  \sum_{i=1}^{m} \Big(\sum_{j=1}^{n+1} [x_{i,j}]\Big)^{\times k} =\sum_{i=1}^{m} \Big(\sum_{j=1}^{n+1} [y_{i,j}]\Big)^{\times k} \ \mbox{in}\ \CH_0(A^k) \ \mbox{for all } k\leq \min (m,n).
		\end{align*}
	\end{theorem}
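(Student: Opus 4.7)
The plan is to parallel the proof of Theorem~\ref{T}, with the K3 surface $S$ replaced by the abelian surface $A$, the Beauville--Voisin class $[o_S]$ by the origin $[0_A]$, and the Chern class map $[\mathcal F]\mapsto c_2(\mathcal F)$ by the natural ``support'' map $[p]\mapsto\sum_{j=1}^{n+1}[x_j]$ in $\CH_0(A)$.

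The first task is to invoke Theorem~\ref{HKevidence}\eqref{kum}, which produces a birational idempotent $\varpi$ on $K_n(A)$ together with an isomorphism $\ho(K_n(A))\cong\Sym^{\leq n}\ho_{\varpi}(K_n(A))$. Via the canonical identification $\ho(K_n(A))\cong\ho(A^{n+1}_0/\mathfrak{S}_{n+1})$ from \cite[\S 6]{ftv}, unpacking the construction in \cite[Thm.~5.5]{Vial-JMPA} identifies $\ho_{\varpi}(K_n(A))$ with the transcendental (Beauville $s=2$) summand $\ho_2(A)$ of $\ho(A)$, and the induced pushforward $\varpi_*\colon\CH_0(K_n(A))\to\CH_0(A)_{(2)}$ sends a closed point $p$ with image $\{x_1,\dots,x_{n+1}\}$ to $\sum_{j=1}^{n+1}[x_j]-(n+1)[0_A]$. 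The zero-sum constraint $\sum_j x_j=0$ in $A$ forces the Beauville $(1)$-component of $\sum_j[x_j]$ to vanish, as its Albanese image is zero, so that the Beauville projector $\pi_{A,(2)}$ satisfies $\pi_{A,(2)}(\sum_j[x_j])=\sum_j[x_j]-(n+1)[0_A]$.

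Applying Proposition~\ref{P:cogen} with $r=n$ then yields that $\sum_i[p_i]=\sum_i[q_i]$ in $\CH_0(K_n(A))$ is equivalent to
\[
\sum_{i=1}^m\Bigl(\sum_{j=1}^{n+1}[x_{i,j}]-(n+1)[0_A]\Bigr)^{\!\times k}=\sum_{i=1}^m\Bigl(\sum_{j=1}^{n+1}[y_{i,j}]-(n+1)[0_A]\Bigr)^{\!\times k}\quad\text{in } \CH_0(A^k),
\]
for all $k\leq\min(m,n)$. Expanding each factor via the binomial theorem for external products and running the same elementary triangular induction on $k$ that closes the proof of Theorem~\ref{T}, these identities reduce to $\sum_i(\sum_j[x_{i,j}])^{\times k}=\sum_i(\sum_j[y_{i,j}])^{\times k}$ in $\CH_0(A^k)$ for all $k\leq\min(m,n)$, as required.

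The main obstacle is the explicit description of $\varpi_*$ above, which requires careful tracking of the birational Chow--K\"unneth decomposition for generalized Kummer varieties constructed in \cite{Vial-JMPA}; the zero-sum condition defining $A^{n+1}_0$ is precisely what makes the Beauville projection down to $\CH_0(A)_{(2)}$ computable in closed form. The remaining manipulations (applying Proposition~\ref{P:cogen} and binomially clearing the $(n+1)[0_A]$ shift) are essentially formal and mirror the corresponding steps in the proof of Theorem~\ref{T}.
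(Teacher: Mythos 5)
Your high-level architecture is right — invoke Theorem~\ref{HKevidence}\eqref{kum} and Proposition~\ref{P:cogen}, then clear the degree shift — and the closing binomial argument matches the one used for Theorem~\ref{T}. But the proof has a genuine gap at exactly the point you flag as the ``main obstacle.'' You assert that ``unpacking the construction in \cite[Thm.~5.5]{Vial-JMPA}'' identifies $\ho_\varpi(K_n(A))$ with $\ho_2(A)$ and that $\varpi_*[p]=\sum_j[x_j]-(n+1)[0_A]$, but that reference does \emph{not} supply this: it provides an abstract graded isomorphism $\ho(K_n(A))\cong\Sym^{\leq n}\ho_2(K_n(A))$ with $\ho_2(K_n(A))$ the degree-$2$ summand of the birational Chow--K\"unneth decomposition on $A^{n+1}_0/\mathfrak{S}_{n+1}$, and gives no closed-form description of $\varpi_*$ on points via the support map. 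Establishing that identification is the actual content of the paper's proof.

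Concretely, the paper embeds $A^{n+1}_0/\mathfrak{S}_{n+1}$ into $A^{n+1}/\mathfrak{S}_{n+1}$ (and shows, via the incidence correspondence $A^{(n,n+1)}$, that $\iota_*$ is split injective on birational motives), and then decomposes the degree-2 Deninger--Murre projector $\varpi^{A^{n+1}}_2$ into two orthogonal pieces $\varpi_{2,0}$ and $\varpi_{1,1}$. The $\varpi_{2,0}$-part is exactly your $\sum_j[x_j]-(n+1)[0_A]$; the $\varpi_{1,1}$-part is an a priori extra piece built out of $\varpi^A_1\otimes\varpi^A_1$. The real work is proving that, for zero-sum tuples, $\varpi_{1,1}$ carries no information beyond $\varpi_{2,0}$. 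This is precisely where the zero-sum constraint and the result of \cite{lin-corr} (that $\sum_j x_j=0$ in $A$ forces $\sum_j[x_j]=\sum_j[-x_j]$ in $\CH_0(A)$) enter: they allow one to simplify $(\varpi_{1,1})_*[x]$ to a cycle of the form $\bigl((\varpi^A_1\times\varpi^A_1)_*\delta_*\sum_j[x_j]\bigr)\times[0]^{\times(n-1)}$, which vanishes whenever $(\varpi_{2,0})_*[x]$ does. Your proposal uses the zero-sum constraint only to kill the Beauville $(1)$-component of $\sum_j[x_j]$ — itself a nontrivial fact that you should attribute (to Beauville, or as the paper does to \cite{lin-corr}) rather than assert — but this is a weaker and differently-placed use of that input than the $\varpi_{1,1}$-vanishing argument that the proof actually needs. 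Without the $\varpi_{1,1}$ analysis, the claimed factorization $\ho(K_n(A))\twoheadrightarrow\ho_2(A)\hookrightarrow\ho(K_n(A))$ is unproved and the application of Proposition~\ref{P:cogen} does not go through.
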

	\begin{proof}
We equip the Chow motive $\h(A)$ of $A$ with its Deninger--Murre Chow--K\"unneth decomposition (which we consider covariantly)
$$\h(A) = \bigoplus_{i=0}^4 \h_i(A), \quad \mbox{with } \h_i(A) =_{\mathrm{def}} (A,\varpi^A_i).$$
 By construction, this decomposition is an eigenspace decomposition for the multiplication by $r$ maps $[r]: A \to A, a \mapsto ra$.
 The action on zero-cycles satisfies $$(\varpi^A_0)_*[a] = [0], \quad (\varpi^A_1)_*[a] = \frac{1}{2}([a]-[-a])\quad  \mbox{and}\quad  (\varpi^A_2)_*[a] = \frac{1}{2}([a]+[-a])-[0]$$ for all closed points $a \in A$. 
The product Chow--K\"unneth decomposition on $A^l$ (which coincides with the Deninger--Murre Chow--K\"unneth decomposition on $A^l$) 
then also provides an eigenspace decomposition for the multiplication by $r$ maps on $A^l$ and 
hence provide a Chow--K\"unneth decomposition for $\h(A^l/\mathfrak{S}_l)$. 
Likewise, identifying $A^{l+1}_0$ with $A^l$ and noting that the sum map commutes with the multiplication by $r$ maps on $A^l$, 
the product Chow--K\"unneth decomposition on $A^l$ provides a Chow--K\"unneth decomposition for $\h(A_0^{l+1}/\mathfrak{S}_{l+1})$.
Via the identification $\ho(K_n(A)) \cong \ho(A_0^{n+1}/\mathfrak{S}_{n+1})$, 
this provides a birational Chow--K\"unneth decomposition $\ho(K_n(A)) = \bigoplus_{i=0}^n \ho_{2i}(K_n(A))$ and it is shown in \cite[Thm.~5.5(iii)]{Vial-JMPA} that the canonical morphism
$$\ho(K_n(A)) \stackrel{\simeq}{\longrightarrow} \mathrm{Sym}^{\leq n}\ho_2(K_n(A))$$ is a graded isomorphism.
On the other hand, the natural embedding $\iota  : A_0^{n+1}/\mathfrak{S}_{n+1} \hookrightarrow A^{n+1}/\mathfrak{S}_{n+1}$ commutes with the multiplication by $r$ maps and thus induces a graded morphism of Chow motives 
$$\iota_*  \colon \h(A_0^{n+1}/\mathfrak{S}_{n+1}) \longrightarrow \h(A^{n+1}/\mathfrak{S}_{n+1}).$$
Moreover, its restriction to the generic point 
$$\iota_* \colon \ho(K_n(A)) \simeq \ho(A_0^{n+1}/\mathfrak{S}_{n+1}) \longrightarrow \ho(A^{n+1}/\mathfrak{S}_{n+1})$$ is split injective. 
Indeed, 
if $$\Gamma =_{\mathrm{def}} A^{(n,n+1)} =_{\mathrm{def}} \{(\{x_1,\ldots, x_n\}, \{x_1,\ldots,x_n,x_{n+1}\}) \  \vert \ x_1,\ldots, x_{n+1}\in A\} \subset A^n/\mathfrak{S}_n \times A^{n+1}/\mathfrak{S}_{n+1}$$
denotes the incidence correspondence, we have 
$\frac{1}{n+1}\Gamma^*\iota_*([p]) = [p]$ in $\CH_0(K_n(A))$ for all closed points $p\in K_n(A)$.

Therefore, by Proposition~\ref{P:cogen}, 	$	\sum_{i=1}^{m}[p_i]=\sum_{i=1}^{m} [q_i] \ \mbox{in}\ \CH_0(K_n(A))$ if and only if 
$$ \sum_{i=1}^{m} \big( (\varpi^{A^{n+1}}_2)_*\iota_*[p_i] \big)^{\times k} 
=\sum_{i=1}^{m} \big( (\varpi^{A^{n+1}}_2)_*\iota_*[q_i] \big)^{\times k} \
 \mbox{in}\ \CH_0((A^{n+1})^k) \ \mbox{for all } k\leq \min (m,n),$$
 where 
 \begin{align*}
 \varpi^{A^{n+1}}_2 = & \underbrace{\big(\varpi^A_2 \otimes \varpi^A_0 \otimes \cdots \otimes \varpi^A_0 +  \mbox{(sym)}\big)}_{\varpi_{2,0}} + \underbrace{\big(\varpi^A_1 \otimes \varpi^A_1 \otimes \varpi^A_0 \otimes \cdots \otimes \varpi^A_0 +  \mbox{(sym)}\big)}_{\varpi_{1,1}}.
 \end{align*}

Since $(\varpi_{2,0})_*\iota_*[p_i] = c\sum_{j=1}^{n+1} [\{x_{ij},0,\ldots,0\}]$ for some non-zero combinatorial constant $c$ only depending on $n$ (and similarly for $q_i$ in place of $p_i$), we only need to show that, for any point $x= \{x_1,\ldots, x_{n+1}\}$ in $A_0^{n+1}/\mathfrak{S}_{n+1}$,
$(\varpi^{A^{n+1}}_2)_*[x]$ vanishes if and only if $(\varpi_{2,0})_*[x]$ vanishes.
The ``only if'' part is clear since the idempotents $\varpi_{2,0}$ and $\varpi_{1,1}$ are orthogonal. For the ``if'' part, we first note that
$(\varpi_{2,0})_* [x] $ is equal to the symmetrization of $d(\sum_{i=1}^{n+1} [x_i]-(n+1)[0])\times [0]^{\times n}$ in $\CH_0(A^{n+1})$ for some non-zero combinatorial constant $d$. In particular, if 
$(\varpi_{2,0})_*[x]$ vanishes,
then $\sum [x_i] = (n+1)[0]$ in $\CH_0(A)$.
Second, the identity $\sum_i x_i = 0$ in $A$ implies \cite{lin-corr} that $\sum_i [x_i] = \sum_i[-x_i]$ in $\CH_0(A)$, i.e., that $(\varpi^A_1)_*\sum_i [x_i] = 0$ in $\CH_0(A)$. Now
$(\varpi_{1,1})_*[x]$ is a non-zero multiple of the symmetrization of
 $$\Big( \sum_i (\varpi^A_1)_*[x_i] \times \sum_i (\varpi^A_1)_*[x_i] - \sum_i (\varpi^A_1)_*[x_i] \times (\varpi^A_1)_*[x_i] \Big)\times [0]^{\times (n-1)}  $$ 
  in $\CH_0(A^{n+1})$.
Thus if $x_1,\ldots, x_{n+1}$ are closed points in $A$ such that $x_1 + \cdots + x_{n+1} = 0$ in $A$, then 
$(\varpi_{1,1})_*[x]$ is a non-zero multiple of the symmetrization of 
$$\sum_i (\varpi^A_1)_*[x_i] \times (\varpi^A_1)_*[x_i]\times [0]^{\times (n-1)} = \Big((\varpi^A_1 \times \varpi^A_1)_* \delta_* \sum_i[x_i] \Big) \times [0]^{\times (n-1)},$$
where $\delta: A \hookrightarrow A\times A$ is the diagonal embedding. It  then clearly follows that if $\sum_i [x_i] = (n+1)[0]$, then $(\varpi_{1,1})_*[x]$ vanishes and hence that $(\varpi_2^{A^{n+1}})_*[x]$ vanishes.
	\end{proof}

		\subsubsection{Fano varieties of lines on cubic fourfolds} Let $Y$ be a smooth cubic hypersurface in $\mathbb{P}^5_\mathbb{C}$ and let $X=F(Y)$ be the Fano variety of lines on $Y$; it is a hyper-K\"ahler variety of dimension~4 polarized by the restriction $g$ of the Pl\"ucker polarization on the Grassmannian $\textup{Gr}(\PP^1,\PP^5)$. By abuse we will denote both by $l$ a line in $Y$ and the corresponding point in $X$.
	
	\begin{theorem} \label{T3}
			Let $X = F(Y)$ be the Fano variety of lines on a smooth cubic fourfold $Y$. Then
	$$[l]=[l'] \ \mbox{in}\ \CH_0(F(Y)) \ \iff \ [l]=[l'] \ \mbox{in}\ \CH_1(Y)$$
	and, for $m>1$,
		$$\sum_{i=1}^m[l_i]=\sum_{i=1}^m[l'_i] \ \mbox{in}\ \CH_0(F(Y)) 
		\ \iff \
		\begin{cases}
	\sum_{i=1}^m[l_i]=\sum_{i=1}^m[l'_i]  \ \mbox{in}\ \CH_1(Y), \mbox{ and}\\
		\sum_{i=1}^m[l_i]\times [l_i]=\sum_{i=1}^m[l'_i] \times [l'_i] \ \mbox{in}\ \CH_2(Y\times Y).
		\end{cases}$$
	\end{theorem}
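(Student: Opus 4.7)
The plan is to mimic the proof of Theorem~\ref{T}, substituting the incidence correspondence $P\subset F(Y)\times Y$ (the universal $\mathbb{P}^1$-bundle of lines) for the Fourier--Mukai-type correspondence used in the moduli-space case. Here $X=F(Y)$ is hyper-K\"ahler of dimension $4$, so $n=2$. By Theorem~\ref{HKevidence}~(\ref{fano}), there exists a birational idempotent $\varpi$ on $X$ with $\varpi_*H^0(X,\Omega^\bullet_X)=H^0(X,\Omega_X^2)$ such that the canonical morphism $\ho(X)\to \Sym^{\leq 2}\ho_\varpi(X)$ is an isomorphism of co-algebra objects. Proposition~\ref{P:cogen} therefore gives
\[
\sum_{i=1}^m[l_i]=\sum_{i=1}^m[l_i']\ \text{in}\ \CH_0(X) \ \iff \ \sum_{i=1}^m (\varpi_*[l_i])^{\times k}=\sum_{i=1}^m (\varpi_*[l_i'])^{\times k}\ \text{in}\ \CH_0(X^k)\ \forall\, k\leq \min(m,2).
\]

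The key remaining step is to rephrase these conditions in terms of the classes $[l_i],[l_i']\in \CH_1(Y)$ and their products in $\CH_2(Y\times Y)$. The explicit construction of $\varpi$ in \cite[Thm.~5.5]{Vial-JMPA}, which relies on the Shen--Vial analysis of $\h(F(Y))$, provides a factorization $\varpi\colon \ho(X)\twoheadrightarrow N\hookrightarrow \ho(X)$ where the birational motive $N$ embeds, via the pushforward along $P$, into an appropriate direct summand of $\h(Y)$. Concretely, analogous to the formula $[\mathcal{F}]\mapsto c_2(\mathcal{F})-c[o_S]$ in the moduli-space case, the composition $P_*\circ\varpi_*\colon \CH_0(F(Y))\to \CH_1(Y)$ sends $[l]\mapsto [l]-c[l_0]$ where $l_0\in F(Y)$ is a Shen--Vial ``Beauville--Voisin'' canonical line and $c\in\mathbb{Q}$ is a normalizing constant; moreover the external-product pushforward $(P\times\cdots\times P)_*$ is injective on the image of $(\varpi^{\otimes k})_*\colon \CH_0(X^k)\to \CH_0(X^k)$. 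The previous condition is therefore equivalent to
\[
\sum_{i=1}^m ([l_i]-c[l_0])^{\times k}=\sum_{i=1}^m ([l_i']-c[l_0])^{\times k}\ \text{in}\ \CH_k(Y^k),\quad \forall\, k\leq \min(m,2),
\]
which in turn, by the same elementary symmetric-polynomial cancellation used at the end of the proof of Theorem~\ref{T}, reduces to $\sum_{i=1}^m [l_i]^{\times k}=\sum_{i=1}^m [l_i']^{\times k}$ in $\CH_k(Y^k)$ for $k\leq \min(m,2)$. Unpacking $k=1$ (required as soon as $m\geq 1$) and $k=2$ (required precisely when $m\geq 2$) yields the stated criteria.

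The main obstacle is the middle step: extracting from \cite[Thm.~5.5]{Vial-JMPA} the explicit description of $N$ and of the action $P_*\circ \varpi_*$ on zero-cycles, and verifying that $(P\times\cdots\times P)_*$ is injective on the relevant summand of $\CH_0(X^k)$. These facts are essentially consequences of the Shen--Vial Fourier decomposition of $\h(F(Y))$ underlying Theorem~\ref{HKevidence}~(\ref{fano}), but the translation from the co-algebra structure on $\ho_\varpi(X)$ to honest product cycles on $Y^k$ requires careful bookkeeping. Once available, the first and last steps are formal, in direct parallel with the proof of Theorem~\ref{T}.
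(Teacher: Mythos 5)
Your proposal follows the same strategy as the paper's proof: use Theorem~\ref{HKevidence}\eqref{fano} plus Proposition~\ref{P:cogen}, then translate the conditions on $\CH_0(F(Y)^k)$ into conditions on $\CH_k(Y^k)$ via the universal line correspondence. However, you explicitly flag the middle step as an obstacle, and that step is exactly where the paper's work lies, so in effect you have reproduced the frame around the proof but not the proof itself.

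Concretely, the paper fills the gap as follows. The birational idempotent $\varpi$ is taken to be the projector onto the eigenvalue $-2$ eigenspace of Voisin's rational self-map $\varphi: F(Y) \dashrightarrow F(Y)$ acting on $\CH_0(F(Y))$ (whose diagonalizable action is established in \cite[Thm.~21.9]{SV}). The crucial technical input you are missing is the Franchetta conjecture for $F(Y)\times F(Y)$, proven in \cite{FLV-Franchetta}: it shows that $\varpi$ is the restriction to the generic point of the \emph{explicit} idempotent
\[
Z_* \circ p \circ \bigl(-\tfrac{1}{6} (p_2^*g^2) \circ Z^*\bigr) \in \End(\h(X)),
\]
where $Z\subset Y\times X$ is the universal line, $p = \Delta_Y - \tfrac{1}{3}\sum_i h^i\times h^{4-i}$ cuts out $\h^4(Y)_{\mathrm{prim}}$, and both $Z_*: \h^6(X)_{\mathrm{prim}} \to \h^4(Y)_{\mathrm{prim}}(-1)$ and its inverse are isomorphisms \emph{of Chow motives}. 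It is this last assertion that gives you the injectivity of $(Z\times\cdots\times Z)_*$ on the image of $(\varpi^{\otimes k})_*$, which you merely postulate. Finally, the explicit formula is $p\circ Z_*[l] = [l]-[l_0]$ with $[l_0]=\tfrac{1}{3}h^3$, so your undetermined normalizing constant $c$ is just $1$. Without invoking the Franchetta input and the motivic isomorphism (rather than just a map inducing an isomorphism on cohomology), your argument does not close.
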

	\begin{proof} Recall from \cite[Thm.~21.9]{SV} that the action of Voisin's birational map $\varphi : X \dashrightarrow X$ on $\CH_0(X)$ diagonalizes.  
	We then consider the birational idempotent correspondence  $\varpi$ on $\ho(X)$ given by the projector with respect to the eigenspace decomposition of $\varphi$ on the eigenspace corresponding to the eigenvalue $-2$. By \cite[Thm.~5.5(iv)]{Vial-JMPA}, the canonical morphism $\ho(X) \to \operatorname{Sym}^{\leq 2} \ho_\varpi(X)$ is an isomorphism.  
		
   Let now $Z =_{\mathrm{def}} \{(y,l) \in Y\times X \ \vert \ y\in l \}$ be the universal line.
   By the Franchetta conjecture for $X\times X$ \cite{FLV-Franchetta}, $\varpi$ is in fact the restriction to the generic point of the idempotent correspondence 
   $$\begin{tikzcd}\h^6(X)_{\mathrm{prim}}  \ar[r,"Z_*"] & \h^4(Y)_{\mathrm{prim}}(-1)\ar[rr,"-\frac{1}{6} (p_2^*g^2) \circ Z^*"] & &\h^6(X)_{\mathrm{prim}}.\end{tikzcd}$$
   Moreover both arrows are isomorphisms of Chow motives. Here $\h^4(Y)_{\mathrm{prim}}$ is the direct summand of the Chow motive $\h(Y)$ cut out by the idempotent $p=_{\mathrm{def}}\Delta_Y - \frac{1}{3}\sum_{i=0}^4 h^i\times h^{4-i}$ with $h \in \CH^1(Y)$ the hyperplane class, and $\h^6(X)_{\mathrm{prim}}$ is the direct summand of $\h(X)$ cut out by the idempotent correspondence $Z_*\circ p\circ \big(-\frac{1}{6} (p_2^*g^2) \circ Z^*\big)$.
	We can then conclude by Proposition~\ref{P:cogen} after noting that
	$p\circ Z_* [l] = [l]-[l_0]$ where $l_0$ is any line on $Y$ with class $\frac{1}{3}h^3$.
	\end{proof}

\section{The Voisin filtration}
In this section we show that Main Conjecture \ref{mainconj} holds unconditionally for the Beauville candidate for the Bloch--Beilinson filtration for abelian varieties, and  present some cases in which Main Conjecture \ref{mainconj} is satisfied for the Voisin candidate for the Bloch--Beilinson filtration, assuming well-known conjectures on algebraic cycles.

\subsection{Main Conjecture \ref{mainconj} for abelian varieties}
In \cite{V3}, Voisin shows that given an abelian variety $A$, a desingularization of the quotient $A/\pm$ satisfies Voisin's Conjecture \ref{Vconj}. This is essentially equivalent to the fact that abelian varieties satisfy Main Conjecture \ref{mainconj} for $m=2$, and the proof can be adapted to show that the following conjecture of Nori implies Main Conjecture \ref{mainconj} for abelian varieties.

\begin{noriconjecture}[\cite{N}]\label{noriconj}
Let $X$ be a smooth projective variety and $w\in \textup{CH}^i(X)$ a cycle such that $w|_T=0\in \textup{CH}_0(T)$ for any $i$-fold $T\subset X$. Then $w=0\in \textup{CH}^i(X)$.
\end{noriconjecture}

\begin{proposition}\label{abprop}
Main Conjecture \ref{mainconj} holds for abelian varieties and $m=2$. Moreover, Nori's Conjecture \ref{noriconj} for an abelian $g$-fold $A$ implies Main Conjecture \ref{mainconj} for $A$.
\end{proposition}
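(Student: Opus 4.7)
The algebra of holomorphic forms of an abelian variety $A$ is generated in degree $d=1$, so the target equivalence is modulo $F_{\mathrm{V}}^{m+1}\CH_0(A)$. The plan is, for $m=2$, to reduce by a translation argument to Voisin's theorem for a desingularization of $A/\pm$ from \cite{V3}, and, for arbitrary $m$ assuming Nori's Conjecture~\ref{noriconj}, to adapt Voisin's strategy by substituting the geometric input of her argument with an application of Nori.

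The preliminary reduction common to both cases places us in the situation $\sum_i x_i = \sum_i y_i = 0$ in $A$. The Voisin filtration $F_{\mathrm{V}}^\bullet$ is preserved by automorphisms of $A$, in particular by translations, while rational equivalence of zero-cycles is translation-equivariant, so a common translation of all the $x_i, y_i$ preserves both sides of the desired equivalence. The hypothesis $\sum [x_i] - \sum [y_i] \in F_{\mathrm{V}}^{m+1}\CH_0(A) \subseteq F_{\mathrm{V}}^2\CH_0(A)$ puts the difference in the kernel of the Albanese map (standardly detected by correspondences to curves and to one-dimensional abelian quotients), forcing $\sum x_i = \sum y_i$ in $A$; translating by an $m$-th root of this common sum then normalizes both tuples.

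For $m=2$ after this reduction, the pairs are $\{x,-x\}$ and $\{y,-y\}$ and descend to closed points $\bar x, \bar y$ of $A/\pm$, hence of a desingularization $\widetilde{A/\pm}$ (whose $\CH_0$ coincides with that of $A/\pm$). Writing $\pi:A\to A/\pm$ for the quotient map of degree $2$, one has $\pi^*[\bar x] = [x]+[-x]$, so $[x]+[-x] = [y]+[-y]$ in $\CH_0(A)$ is equivalent to $[\bar x]=[\bar y]$ in $\CH_0(\widetilde{A/\pm})$. Moreover, any correspondence $\widetilde \Gamma$ from $\widetilde{A/\pm}$ to a surface $T$ pulls back under $\pi$ to a $(-1)$-invariant correspondence $A\to T$, and applying the $F_{\mathrm{V}}^3$-hypothesis on $A$ to this pullback yields $\widetilde\Gamma_*([\bar x]-[\bar y]) = 0$ up to the factor of $2$ picked up by $\pi_*\pi^* = 2\,\mathrm{id}$. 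Thus $[\bar x]-[\bar y] \in F_{\mathrm{V}}^3\CH_0(\widetilde{A/\pm})$, and the $m=2$ case follows from Voisin's theorem \cite{V3}.

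For general $m$ under Nori's Conjecture~\ref{noriconj}, the plan is to associate to $w = \sum [x_i] - \sum [y_i]$ a codimension-$m$ cycle $\alpha$ on a suitable product of copies of $A$, built naturally from $w$ (for instance from an iterated external power $w^{\times m}$ on $A^m$, symmetrized and paired with the diagonal), with two properties: first, $\alpha = 0$ is equivalent to $w=0$ in $\CH_0(A)$; second, the vanishing of all restrictions $\alpha|_T$ to $m$-dimensional subvarieties $T$ is exactly the vanishing condition encoded by $F_{\mathrm{V}}^{m+1}\CH_0(A)$, which is defined as the intersection of kernels of correspondences into $m$-folds. Under these two properties the hypothesis $w \in F_{\mathrm{V}}^{m+1}\CH_0(A)$ gives $\alpha|_T = 0$ for every $m$-fold $T$, Nori's Conjecture then forces $\alpha = 0$, and hence $w=0$ in $\CH_0(A)$. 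The main obstacle is the explicit construction of $\alpha$: identifying a codimension-$m$ class whose vanishing captures $w=0$ and whose restrictions to $m$-folds precisely match the correspondences defining $F_{\mathrm{V}}^{m+1}$, in a way that is uniform in $m$ and that specializes, for $m=2$, to the geometric construction implicit in Voisin's passage to $A/\pm$.
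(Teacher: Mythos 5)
Your $m=2$ reduction is essentially the paper's: translate so that the two pairs become $\{a,-a\}$ and $\{b,-b\}$, then invoke Voisin's result for (a desingularization of) $A/\pm$. The paper cites \cite[Prop.~2.17]{V3} directly on $A$ rather than passing explicitly through $\widetilde{A/\pm}$, but your route through the quotient and the pullback of correspondences along $\pi$ is the same geometric argument; this part is fine.

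The general-$m$ part, however, has a genuine gap, and you acknowledge it yourself. You propose a codimension-$m$ cycle $\alpha$ on a product of copies of $A$ such that (i) $\alpha=0$ iff $w=0$ in $\CH_0(A)$, and (ii) the hypothesis $w\in F_{\mathrm{V}}^{m+1}$ forces $\alpha|_T=0$ for all $m$-folds $T$, so that Nori gives $\alpha=0$. But no such $\alpha$ is constructed: a symmetrization of $w^{\times m}$ is a zero-cycle on $A^m$, not a codimension-$m$ cycle, and it is unclear why the vanishing of such an $\alpha$ should recover $w=0$ (in fact (i) alone would essentially be the statement being proved). Moreover, the Voisin filtration is defined by pushforwards $\Gamma_*:\CH_0(A)\to\CH_0(Y)$ along correspondences $\Gamma\in\CH^m(A\times Y)$, not by restrictions, so (ii) as stated does not align with the definition; the correct way Nori enters is through Lemma~\ref{corresp}: Nori implies that any codimension-$j$ correspondence annihilates $F^{j+1}_{\mathrm{BB}}$.

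The paper's actual argument is concrete and quite different: fix an ample $\Theta$ and set $D_x=\Theta_x-\Theta$; the map $x\mapsto D_x^j$ is given by a correspondence in $\CH^j(A\times A)$, so Lemma~\ref{corresp} and the hypothesis $w\in F_{\mathrm{V}}^{m+1}$ give $\sum_i D_{x_i}^j=\sum_i D_{y_i}^j$ for $j\le m$; the fundamental theorem on symmetric polynomials upgrades this to all $j$; Beauville's Fourier-theoretic formula identifies $\frac{\Theta^{g-j}}{(g-j)!}D_{x}^j$ with $\frac{\Theta^{g}}{g!}*\gamma(x)^{*j}$ where $\gamma(x)=\log[x]$ in the Pontryagin ring; and exponentiating recovers $\sum_i[x_i]=\sum_i[y_i]$. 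To complete your proof you would need to supply some equivalent of this computation; the abstract $\alpha$-plan does not.
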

\begin{proof}
Suppose \[[a]+[a']=[b]+[b'] \ \mbox{in}\ \textup{CH}_0(A)/F^3_{\mathrm{V}}\textup{CH}_0(A).\]
Then $a+a'=b+b'$ so, translating by $-a-a'$ if needed, we can assume that $a'=-a$, $b'=-b$. Voisin shows in \cite[Prop.~2.17]{V3} that
$$[a]+[-a]=[b]+[-b]\ \mbox{in}\ \textup{CH}_0(A)/F^3_{\mathrm{BB}}\textup{CH}_0(A)\ \iff \  [a]+[-a]=[b]+[-b]\ \mbox{in}\ \textup{CH}_0(A).$$

For the general case we can follow a similar argument. The main differences are the use of the fundamental theorem on symmetric polynomials and the substitution of Nori's Conjecture \ref{noriconj} in place of a theorem of Joshi \cite{J}. Suppose that 
\begin{equation}\label{equal}
\sum_{i=1}^m [x_i]=\sum_{i=1}^m [y_i]\ \mbox{in}\ \textup{CH}_0(A)/F_{\mathrm{BB}}^{m+1}\textup{CH}_0(A).
\end{equation}
Let $\Theta$ be an ample divisor that gives an isogeny
\begin{align*}
A& \ \longrightarrow \ \ \ \ \ \  \ \ \ \   \widehat{A}\\
x& \ \longmapsto  \ D_x=_{\textup{def}}\Theta_x-\Theta.
\end{align*}
The map $x\mapsto D_x^j\in \textup{CH}^j(A)$ is a given by a correspondence in $\textup{CH}^j(A\times A)$. Hence, assuming Nori's Conjecture \ref{noriconj}, we can use Lemma \ref{corresp} and \eqref{equal} to deduce that
$$\sum_{i=1}^mD_{x_i}^j=\sum_{i=1}^m D_{y_i}^j,\qquad \forall j\in \{1,\ldots, m\}.$$
The fundamental theorem on symmetric polynomials then implies that
$$\sum_{i=1}^mD_{x_i}^j=\sum_{i=1}^m D_{y_i}^j \ \mbox{in}\ \textup{CH}^j(A),\qquad \forall j\in \mathbb{N}.$$
The Chow ring of $A$ has two ring structures, one given by intersection and the other by the Pontryagin product:
\begin{align*}\textup{CH}^\bullet(A)\times \textup{CH}^\bullet(A) \ \longrightarrow \ &  \ \textup{CH}^\bullet(A)\\
(\alpha,\beta) \ \ \ \ \ \ \ \ \ \ \longmapsto \  & \Sigma_*(\alpha\times \beta),
\end{align*}
where $\Sigma: A\times A\longrightarrow A$ is the sum map. 
A formula of Beauville then gives
$$\sum_{i=1}^m\frac{\Theta^{g-j}}{(g-j)!} D_{x_i}^j \ = \ \sum_{i=1}^m \frac{\Theta^g}{g!}*\gamma(x_i)^{*j}  \ \mbox{in} \  \textup{CH}_0(A),\qquad \forall j\in \mathbb{N},$$
where 
$$\gamma(x)=\sum_{k=1}^n\frac{1}{k}([x]-[0_A])^{*k}$$
is the logarithm of $[x]$. Since $\text{exp}(\gamma(x))=[x]$ and we can assume that $\Theta^g/g!=d[0_A]$ for some positive integer $d$, we get
$$\sum_{i=1}^m [x_i]=\sum_{i=1}^m\text{exp}(\gamma(x_i))=\sum_{i=1}^m\text{exp}(\gamma(y_i))=\sum_{i=1}^m [y_i]\in \textup{CH}_0(A).$$
\end{proof}

Instead of considering the filtration $F_{\mathrm{V}}^{\bullet}$, we can consider the Beauville filtration $F_{\mathrm{B}}^{\bullet}$ on the Chow ring of an abelian variety (see \cite{Beau2}) to obtain an unconditional proof of an analogue of Main Conjecture~\ref{mainconj}:

\begin{theorem}\label{uncondabvar}
Let $A$ be an abelian $g$-fold and let $x_1,\ldots, x_m,y_1,\ldots, y_m\in A$,
\begin{align*}
 \sum_{i=1}^m [x_i]=\sum_{i=1}^m [y_i]   \ \mbox{in} \ \textup{CH}_0(A)\iff \sum_{i=1}^m [x_i]-\sum_{i=1}^m [y_i]  \ \mbox{lies in} \ \bigoplus_{s=m+1}^g\textup{CH}^g_{(s)}(A).
\end{align*}
\end{theorem}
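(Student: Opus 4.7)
The reverse implication is immediate; for the forward direction, the strategy is to exploit Beauville's Pontryagin product on $\CH^\bullet(A)$ together with his Pontryagin logarithm, and then apply the fundamental theorem of symmetric polynomials.

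Recall that the Pontryagin product $\alpha * \beta := \sigma_*(\alpha \times \beta)$, where $\sigma : A \times A \to A$ is the group law, makes $\CH^g(A)$ into a commutative $\mathbb{Q}$-algebra with unit $[0_A]$. Since $\sigma$ commutes with $[k]$ in each factor, one has $\CH^g_{(s)}(A) * \CH^g_{(s')}(A) \subset \CH^g_{(s+s')}(A)$, so that $\bigoplus_{s\geq 1}\CH^g_{(s)}(A)$ is a nilpotent ideal in this Pontryagin algebra. Following Beauville \cite{Beau2}, the Pontryagin logarithm $\gamma(x) := \sum_{k\geq 1} \tfrac{1}{k}([x]-[0])^{*k}$ defines a group homomorphism $A(\mathbb{C}) \to \CH^g_{(1)}(A)$ whose $*$-exponential recovers $[x]$. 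In particular $\gamma(x)^{*s} \in \CH^g_{(s)}(A)$, and comparing $[x] = \sum_{s\geq 0} \gamma(x)^{*s}/s!$ with the Beauville decomposition of $[x]$ yields the identification $[x]_{(s)} = \gamma(x)^{*s}/s!$ for every $s$.

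Set $p_s := \sum_{i=1}^m \gamma(x_i)^{*s}$ and $q_s := \sum_{i=1}^m \gamma(y_i)^{*s}$, the $s$-th Pontryagin power sums of the two families. The hypothesis that $\sum[x_i]-\sum[y_i]$ lies in $\bigoplus_{s\geq m+1}\CH^g_{(s)}(A)$ then translates, via the identification above, precisely to $p_s = q_s$ for $s = 1,\ldots,m$; the desired conclusion $\sum [x_i] = \sum [y_i]$ in $\CH_0(A)$ is equivalent to $p_s = q_s$ for all $s \geq 1$. I then invoke Newton's identities in the commutative $\mathbb{Q}$-algebra $(\CH^g(A), *)$: these express each elementary symmetric function $e_k(\gamma(x_\bullet))$ polynomially in $p_1, \ldots, p_k$, and conversely. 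Therefore $e_k(\gamma(x_\bullet)) = e_k(\gamma(y_\bullet))$ for all $k \leq m$, while for $k > m$ both sides vanish automatically, being sums of $*$-products of $k$ distinct elements drawn from a family of size $m$. Hence all elementary symmetric functions coincide, so all power sums coincide, and $\sum[x_i]=\sum[y_i]$ in $\CH_0(A)$. Once the Pontryagin formalism is in place, the argument amounts to routine bookkeeping against the Beauville grading; the only point requiring care is the identification $[x]_{(s)} = \gamma(x)^{*s}/s!$, which relies crucially on $\gamma$ landing in the $(1)$-piece.
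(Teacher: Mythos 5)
Your proof is correct, and it takes a genuinely more streamlined route than the paper. The paper proceeds via the cycles $D_x = \Theta_x - \Theta$ and the correspondences $\Gamma_j$ realizing $x \mapsto D_x^j$: it first argues that $\Gamma_j$ kills the high-weight Beauville pieces because $\CH^j_{(s)}(A) = 0$ for $s>j$, so the hypothesis gives $\sum_i D_{x_i}^j = \sum_i D_{y_i}^j$ for $j\leq m$; it then applies the fundamental theorem of symmetric polynomials and, finally, invokes Beauville's formula $\frac{\Theta^{g-j}}{(g-j)!}D_x^j = \frac{\Theta^g}{g!}*\gamma(x)^{*j}$ to translate back into Pontryagin powers before exponentiating. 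Your proof avoids the detour through $D_x^j$ entirely: the identity $[x]_{(s)} = \gamma(x)^{*s}/s!$ (which follows from $\exp_*(\gamma(x)) = [x]$ together with $\gamma(x)\in\CH^g_{(1)}(A)$ and the additivity of the Beauville grading under $*$) turns the hypothesis directly into the statement $p_s = q_s$ for $s\leq m$ about Pontryagin power sums, and then the fundamental theorem of symmetric polynomials in the commutative $\mathbb{Q}$-algebra $(\CH^g(A),*)$ closes the argument. Both proofs rest on the same two pillars — the Pontryagin logarithm/exponential and symmetric function theory — but yours is purpose-built for the Beauville filtration and dispenses with the correspondence and intersection-theoretic intermediary that the paper imports from Voisin's more general scheme. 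The one point you rightly flag as requiring care, namely that $\gamma$ lands in $\CH^g_{(1)}(A)$, is exactly Beauville's theorem and is of course also what makes the paper's argument work.
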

\begin{proof}
The map $x\mapsto D_x^j\in \textup{CH}^j(A)$ as above is a given by a correspondence $\Gamma_j\in \textup{CH}^j(A\times A)$. Since $\textup{CH}_{(s)}^i(A)=0$ for all $s>i$,
$${\Gamma_j}_* (\textup{CH}_{(s)}^g(A))=0,\qquad \forall s\geq j.$$
Thus, if $\sum_{i=1}^m [x_i]-\sum_{i=1}^m [y_i] \in \bigoplus_{s=m+1}^g\textup{CH}^g_{(s)}(A)$, 
$$\sum_{i=1}^mD_{x_i}^j=\sum_{i=1}^m D_{y_i}^j,\qquad \forall j\in \{1,\ldots, m\}.$$
We can then apply the same reasoning as in the proof of Proposition \ref{abprop}.
\end{proof}

\begin{remark}
	Let $\h(A)=\bigoplus_{i=0}^{2g} \h_i(A)$, with $\h_i(A) = (A,\varpi_i)$, be the Deninger--Murre decomposition of the Chow motive of $A$.
	If one takes $\varpi$ to be $\varpi_1|_{\eta_A \times A}$, where $\eta_A$ is the generic point of $A$, then by K\"unnemann~\cite{K-abelian} $\ho(A)$ is co-generated by $\ho_\varpi(A)$ and the filtration $F^\bullet_\varpi$ is the Beauville filtration, so that Proposition~\ref{P:cogen2} recovers Theorem~\ref{uncondabvar}.
\end{remark}

\subsection{Deducing Main Conjecture \ref{mainconj} for hyper-K\"ahler varieties from well-known conjectures on algebraic cycles}

In \cite{V3}, the author shows how the nilpotence conjecture implies Voisin's Conjecture \ref{Vconj} for hyper-K\"ahler varieties satisfying the Lefschetz standard conjecture in degree $2$. In \cite[Rem.~2.14]{V3} she specifies that her proof does not imply a stronger version of the statement of Main Conjecture \ref{mainconj}, where the depth in the Bloch--Beilinson does not depend on the degree $m$ of the effective zero-cycles. 
In short the reason is that $(s-t)\in\mathbb{Z}[s,t]$ divides $(s^n-t^n)$ for all $n>0$, whereas 
$$\sum_{i=1}^m s_i-\sum_{i=1}^m t_i  \ \mbox{in} \  \mathbb{Q}[s_1,\ldots, s_m,t_1,\ldots, t_m]$$
need not divide
$$\sum_{i=1}^m s_i^n-\sum_{i=1}^m t_i^n.$$
Nonetheless, one can adapt Voisin's argument using the fundamental theorem on symmetric polynomials and Nori's Conjecture \ref{noriconj}. The main consequence of Nori's Conjecture \ref{noriconj} we will need is the following:
\begin{lemma}[{\cite[Lem.~2.8]{V3}}]\label{corresp}
If Nori's Conjecture \ref{noriconj} holds, then for any smooth projective varieties $X,Y$, correspondence $\Gamma\in \textup{CH}^i(X\times Y)$, and $w\in F^{i+1}_{\mathrm{BB}}\textup{CH}_0(X)$, we have $\Gamma_* w=0$.
\end{lemma}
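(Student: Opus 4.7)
The plan is to reduce to the definition of the Voisin filtration using Nori's Conjecture. Set $w' := \Gamma_* w \in \CH^i(Y)$: by Nori's Conjecture~\ref{noriconj} applied to $Y$, it suffices to prove that $w'|_T = 0$ in $\CH_0(T)$ for every smooth projective closed $i$-fold $T \subset Y$ (if $T$ is singular, one first replaces it by a resolution $\tilde T \to T$ and uses proper pushforward; Nori's Conjecture thus tests $w'$ against all maps from smooth projective $i$-folds into $Y$).

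For such a $T$ with inclusion $i_T: T \hookrightarrow Y$, set $\Gamma_T := (\mathrm{id}_X \times i_T)^* \Gamma \in \CH^i(X \times T)$, interpreted via the refined Gysin pullback along the regular embedding $\mathrm{id}_X \times i_T$. A diagram chase in the cartesian square with vertical projections $p_Y : X \times Y \to Y$ and $p_T : X \times T \to T$, combining flat base change with the projection formula, yields
$$(\Gamma_* w)\big|_T \ = \ i_T^*\, (p_Y)_* \big( p_X^* w \cdot \Gamma \big) \ = \ (p_T)_* \big( p_{X,T}^* w \cdot \Gamma_T \big) \ = \ (\Gamma_T)_* w \quad \mbox{in}\quad \CH_0(T),$$
where $p_X, p_{X,T}$ denote the projections to $X$. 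The key observation is then that $\Gamma_T$ lies in $\CH^i(X \times T)$ with $T$ a smooth projective $i$-fold, so by the very definition of the Voisin filtration $F^\bullet_{\mathrm{V}}$ (which is the Bloch--Beilinson candidate in use in this section), the hypothesis $w \in F^{i+1}_{\mathrm{V}}\CH_0(X)$ forces $(\Gamma_T)_* w = 0$. Running this for all $T$ and invoking Nori's Conjecture yields $\Gamma_* w = 0$.

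Conceptually, Nori's Conjecture is precisely the bridge allowing one to test cycles in $\CH^i(Y)$ against $i$-dimensional subvarieties, which is the regime in which the Voisin filtration is defined; no further ingredient is needed. The only technical point requiring care is the base-change identity above when $\Gamma$ fails to meet $X \times T$ properly, which is handled by Fulton's refined intersection product, together with the reduction from an arbitrary $i$-fold $T\subset Y$ to a smooth model via resolution of singularities. Beyond these routine checks, the argument is essentially formal.
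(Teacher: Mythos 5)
Your proof is correct and follows the same route as Voisin's original argument in \cite[Lem.~2.8]{V3}, which the paper cites without reproducing: observe that $\Gamma_* w$ lies in $\CH^i(Y)$, show that its restriction to any (smooth projective) $i$-fold $T$ in $Y$ is computed by the correspondence $\Gamma_T \in \CH^i(X\times T)$ obtained by refined Gysin pullback, invoke the defining property of the Voisin filtration $F^{i+1}_{\mathrm V}$ to kill $(\Gamma_T)_* w$, and conclude by Nori's Conjecture applied to $Y$. The only point worth flagging is the passage from singular $i$-folds $T\subset Y$ (which is what Nori's Conjecture literally quantifies over) to smooth ones via a resolution $\pi\colon \tilde T\to T$: you should say explicitly that after moving $\Gamma_* w$ into general position with respect to $T$, its restriction $w'|_T$ is an honest $0$-cycle supported in the locus where $\pi$ is an isomorphism, so that $\pi_*\bigl(f^*w'\bigr) = w'|_T$ and hence $f^*w'=0$ for all smooth models indeed forces $w'|_T=0$; as written, this compatibility is asserted but not quite spelled out. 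You also implicitly identify the $F_{\mathrm{BB}}$ in the statement with the candidate $F_{\mathrm{V}}$, which is the correct reading in this section of the paper.
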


\begin{proposition}\label{HKprop}
Let $X$ be a hyper-K\"ahler variety which satisfies Nori's Conjecture \ref{noriconj} and the Lefschetz standard conjecture in degree $2$. Then the nilpotence conjecture implies Main Conjecture~\ref{mainconj} for $X$.
\end{proposition}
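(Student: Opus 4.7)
The plan is to adapt Voisin's hyper-K\"ahler argument in \cite{V3} (which treats the $m=1$ case, namely Voisin's Conjecture~\ref{Vconj}) to arbitrary $m$, following the template of the abelian adaptation in Proposition~\ref{abprop}. The two new ingredients beyond Voisin's original proof are Nori's Lemma~\ref{corresp} and the fundamental theorem on symmetric polynomials; the latter replaces the divisibility $(s-t)\mid(s^n-t^n)$ that powered the $m=1$ argument but has no direct analog for sums of more than one variable, as flagged in \cite[Rem.~2.14]{V3}.

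Under the Lefschetz standard conjecture in degree~$2$ and the nilpotence conjecture, Voisin's argument provides a self-correspondence $Z\in\CH^2(X\times X)$ of \emph{low} codimension whose cohomology class projects onto the transcendental part of $H^2(X)$, and such that the induced map $Z_*\colon\CH_0(X)\to\CH_{\dim X-2}(X)$ satisfies: $Z_*[x]=Z_*[y]$ if and only if $[x]=[y]$ in $\CH_0(X)$. I take this statement, which is Voisin's Conjecture~\ref{Vconj} for $X$, as a black box.

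Fix $w:=\sum_{i}[x_i]-\sum_{i}[y_i]\in F_{\mathrm{BB}}^{2m+1}\CH_0(X)$, and for each $1\leq k\leq m$ form the external-power correspondence $\Gamma_k:=Z^{\boxtimes k}\circ\delta^{k-1}\in\CH^{2k}(X\times X^k)$, where $\delta^{k-1}\colon X\hookrightarrow X^k$ is the small diagonal. A direct composition computation gives $\Gamma_{k,*}[x]=(Z_*[x])^{\times k}$ and codimension exactly $2k\leq 2m$; since $F_{\mathrm{BB}}^{2m+1}\subseteq F_{\mathrm{BB}}^{2k+1}$, Lemma~\ref{corresp} applies and yields
\[
\sum_{i=1}^{m}(Z_*[x_i])^{\times k}\ =\ \sum_{i=1}^{m}(Z_*[y_i])^{\times k}\quad \mbox{in }\CH(X^k),\quad \mbox{for all } k\leq m.
\]
Applied to the elements $Z_*[x_i], Z_*[y_i]$ of the rational vector space $\CH_{\dim X-2}(X)$, the fundamental theorem on symmetric polynomials (valid in characteristic zero via Newton's identities and polarization) converts these $m$ power-sum identities into equality of the multisets $\{Z_*[x_i]\}_{i=1}^m$ and $\{Z_*[y_i]\}_{i=1}^m$; hence some permutation $\sigma\in\mathfrak{S}_m$ realizes $Z_*[x_i]=Z_*[y_{\sigma(i)}]$ for all $i$, and Voisin's Conjecture for $X$ upgrades this to $[x_i]=[y_{\sigma(i)}]$ in $\CH_0(X)$. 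Summing over $i$ yields $\sum[x_i]=\sum[y_i]$ in $\CH_0(X)$, as desired.

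The main obstacle is the step asserting that the power-sum equalities pin down the multisets in the full vector space $\CH_{\dim X-2}(X)$ rather than only in each one-dimensional quotient; this is precisely where polarization of $V^{\otimes k}$ and Newton's identities over $\mathbb{Q}$ do the work, and it is the structural reason why the depth $2m+1$ in the Bloch--Beilinson filtration is exactly right: one needs $\Gamma_k$ to be killed by Lemma~\ref{corresp} at every intermediate codimension $2k\leq 2m$, and no less, to access each power sum.
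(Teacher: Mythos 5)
Your proposal diverges from the paper's argument in a way that introduces a genuine gap, and in fact would prove something false if it worked.

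The paper's proof uses the \emph{self-intersections} $Z_{\text{lef}}^j\in \CH^{2j}(X\times X)$ rather than external powers. After applying Lemma~\ref{corresp} one gets
$\sum_i (Z_{\text{lef},x_i})^j = \sum_i (Z_{\text{lef},y_i})^j$ in $\CH^{2j}(X)$ for $j\leq m$, where $(Z_{\text{lef},x})^j$ is the $j$-fold \emph{intersection} product in the commutative ring $\CH^\bullet(X)$. The fundamental theorem on symmetric polynomials is then invoked in that commutative $\mathbb{Q}$-algebra: since $p_j$ in $m$ variables is, for every $j$, a polynomial in $p_1,\ldots,p_m$, the equality of power sums propagates to \emph{all} $j$, including $j\leq n$. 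Finally one plugs into Voisin's cycle $P=\sum_j \mu_j Z_{\text{lef}}^j\cdot \pr_2^*h_X^{2n-2j}$, which under the nilpotence conjecture acts as the identity on $\CH_0(X)$, to get $\sum [x_i]=\sum[y_i]$. The only conclusion drawn is the equality of the \emph{sums}.

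Your version replaces the self-intersections by the external correspondences $\Gamma_k=Z^{\boxtimes k}\circ\delta^{k-1}$, which gives the equalities $\sum_i (Z_*[x_i])^{\times k}=\sum_i (Z_*[y_i])^{\times k}$ in $\CH(X^k)$ for $k\leq m$. The problem is your next step: you assert that these $m$ identities force equality of the \emph{multisets} $\{Z_*[x_i]\}$ and $\{Z_*[y_i]\}$ in $\CH_{\dim X-2}(X)$. That inference requires the power-sum identities to hold in $\operatorname{Sym}^k\!\big(\CH_{\dim X-2}(X)\big)$, but they are only known in $\CH(X^k)$, and the external product map $\operatorname{Sym}^k \CH^2(X)\to \CH^{2k}(X^k)$ is not known (and not expected) to be injective. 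So Newton's identities plus the UFD argument in $\operatorname{Sym}^\bullet V[t]$ simply do not apply. Worse, the conclusion you are aiming at is too strong to be true: combined with your black box, multiset equality would upgrade to $[x_i]=[y_{\sigma(i)}]$ pointwise, whereas $\sum[x_i]=\sum[y_i]$ in $\CH_0(X)$ of course holds all the time without the individual classes matching up (already on a K3 surface one has $[x_1]+[x_2]=2[o_S]$ with $[x_1]\neq[o_S]$). The fix is to work with intersection powers in $\CH^\bullet(X)$, not external powers in $\CH(X^k)$, precisely so that the fundamental theorem on symmetric polynomials can be applied inside a single commutative $\mathbb{Q}$-algebra and so that one concludes only the equality of sums via $P_*=\mathrm{id}$.
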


\begin{proof}
This proof is a simple adaptation of Voisin's argument in Section 2.2 of \cite{V3} and we use the same notation. $X$ is a smooth projective hyper-K\"ahler variety of dimension $2n$ and we fix a polarizing class $h_X\in \text{Pic}(X)$. We let $Z_{\text{lef}}\in \textup{CH}^2(X\times X)$ be a cycle whose existence is predicted by the Lefschetz standard conjecture in degree $2$, namely such that $[Z_{\text{lef}}]^*: H^{4n-2}(X)\longrightarrow H^2(X)$ is the inverse of the cup product map with $h_X^{2n-2}$. As shown in \cite{V3}, there is a polynomial in $Z_{\text{lef}}$ and $\text{pr}_2^*h_X$ 
$$P \ = \ \sum_{i=0}^n \mu_iZ_{\text{lef}}^i\cdot \text{pr}_2^* h_X^{2n-2i}  \ \mbox{in} \  \textup{CH}^{2n}(X\times X)$$
such that $[P]^*$ acts as the identity on holomorphic forms. Reasoning as in \cite{V3} and assuming the nilpotence conjecture, one sees that the map $P_*: \textup{CH}_0(X)\longrightarrow \textup{CH}_0(X)$ is the identity.
\medskip

Suppose that $x_1,\ldots, x_m,y_1,\ldots, y_m\in X$ and
\[\sum_{i=1}^m [x_i]-\sum_{i=1}^m [y_i]\in F^{2m+1}_{\mathrm{V}}\textup{CH}_0(X).\]
Then by Nori's Conjecture \ref{noriconj}
$$\sum_{i=1}^m(Z_{\text{lef}}^j)_{x_i} \ = \ \sum_{i=1}^m(Z^j_{\text{lef}})_{y_i}  \ \mbox{in} \  \textup{CH}^{2i}(X),\qquad j=1,\ldots, m.$$
Here, for a correspondence $Z \in \CH^i(X\times X)$ and a closed point $x\in X$, we write $Z_x$ for $Z_*[x]$.
Denoting by $\iota_x: X\longrightarrow X\times X$ the embedding given by $\iota_x(y)=(x,y)$, we have
\begin{align*}Z_{\text{lef}}^j\cdot \{x\}\times X&=Z_{\text{lef}}^j\cdot {\iota_x}_*(X)= {\iota_x}_*(\iota_x^*(Z_{\text{lef}}^j))= {\iota_x}_*(\iota_x^*(Z_{\text{lef}})^j)\\&={\iota_x}_*\big((Z_{\text{lef},x})^j\big)=\text{pr}_1^*(\{x\})\cdot\text{pr}_2^*\big((Z_{\text{lef},x})^j\big).\end{align*}
Accordingly,
$$(Z_{\text{lef}}^j)_x=(Z_{\text{lef},x})^j,\qquad  j=0,\ldots, n.$$
This gives
$$\sum_{i=1}^m(Z_{\text{lef}, x_i})^j=\sum_{i=1}^m(Z_{\text{lef}, y_i})^j,\qquad j=0,\ldots, m.$$
By the fundamental theorem on symmetric polynomials the same equality holds for all $j$ and thus
$$\sum_{i=1}^m(Z_{\text{lef}}^j)_{x_i}=\sum_{i=1}^m(Z_{\text{lef}}^j)_{y_i},\qquad j=0,\ldots, n.$$
Finally, we conclude that
$$\sum_{i=1}^m [x_i]=P_*\left(\sum_{i=1}^m[x_i]\right)=P_*\left(\sum_{i=1}^m[y_i]\right)=\sum_{i=1}^m[y_i]  \ \mbox{in} \  \textup{CH}_0(X).$$
\end{proof}

\section{Polynomial decompositions of the diagonal }

\subsection{Definition and examples}

In \cite{V3}, Voisin formulates the following conjecture, which implies Voisin's Conjecture~\ref{Vconj} for hyper-K\"ahler varieties with respect to the Voisin filtration; see \cite[Prop.~2.15]{V3}.

\begin{conjecture}[{\cite[Conj.~2.16]{V3}}]\label{HKdiag}Consider a smooth projective hyper-K\"ahler $2n$-fold~$X$, 
a polarization $h_X$, and a cycle $Z_{\textup{lef}}\in \textup{CH}^2(X\times X)$ such that $[Z_{\textup{lef}}]^*$ is the inverse of the cup product with $h_X^{2n-2}$. There are cycles $\gamma_i\in \textup{CH}^{2n-2i}(X)$, $i=0,\ldots, n$, a divisor $D\subset X$, and a cycle $W\in \textup{CH}^{2n}(X\times X)$ supported on $D\times X$ such that
$$\Delta_X = \sum_{i=0}^n Z_{\textup{lef}}^i\cdot \textup{pr}_2^*(\gamma_i)+W \in  \textup{CH}^{2n}(X\times X).$$
\end{conjecture}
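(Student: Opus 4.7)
The plan is to establish Conjecture~\ref{HKdiag} in two stages: first produce a cohomological decomposition of the diagonal of the required polynomial shape modulo cycles supported on $D\times X$, and then upgrade this to rational equivalence using standard conjectures on algebraic cycles.

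For the cohomological construction, the key observation is that a correspondence of the form $Z_{\textup{lef}}^i\cdot \textup{pr}_2^*(\gamma_i)$ acts on $H^\bullet(X)$ by $\beta\mapsto \gamma_i\cdot (Z_{\textup{lef}}^i)_*\beta$. The hypothesis that $[Z_{\textup{lef}}]^*$ is inverse to cup product with $h_X^{2n-2}$, combined with the hard Lefschetz theorem, prescribes the action of each $(Z_{\textup{lef}}^i)_*$ on primitive cohomology. I would exploit the freedom in the $\gamma_i$ to arrange that $P =_{\textup{def}} \sum_i Z_{\textup{lef}}^i \cdot \textup{pr}_2^*(\gamma_i)$ acts on $H^0(X,\Omega_X^\bullet)$ as the identity; taking $\gamma_i=\mu_i h_X^{2n-2i}$ for rational constants $\mu_i$ determined by an explicit triangular linear system suffices, exactly as in the argument underlying Proposition~\ref{HKprop}. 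The class $[\Delta_X]-[P]$ then acts trivially on $H^0(X,\Omega_X^\bullet)$.

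The next step is to show that $[\Delta_X]-[P]\in H^{4n}(X\times X,\mathbb{Q})$ is representable by a cycle $W\in \textup{CH}^{2n}(X\times X)$ supported on $D\times X$ for some divisor $D\subset X$. The vanishing of its action on $H^{0,\bullet}(X)$ forces all its K\"unneth components of Hodge type $(2n,2n-k)$ in the first factor to vanish, and by Hodge symmetry and the generalized Hodge conjecture applied to $X\times X$ the class should lie in the image of the Gysin pushforward from a smooth divisor in the first factor. Invoking the nilpotence conjecture, the residue $\Delta_X-P-W$ is then nilpotent under composition of correspondences. The plan is to iteratively absorb this nilpotent remainder into additional terms of the polynomial $P$ and the support term $W$, in the spirit of the Bloch--Srinivas diagonal argument.

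The main obstacle is this last absorption step: even granting the generalized Hodge and nilpotence conjectures, arranging the Chow-theoretic correction to be supported on $D\times X$ rather than leaving a residue on $X\times D$ requires an asymmetric form of the generalized Bloch conjecture for self-correspondences of $X$, which is genuinely harder than its symmetric counterpart. For hyper-K\"ahler varieties whose birational motive is well-understood --- in particular those carrying a multiplicative Chow--K\"unneth decomposition in the sense of \cite{Vial-JMPA} --- one could try to make this unconditional by combining a Franchetta-type property on a smooth polarized family with the explicit co-algebra structure on $\ho(X)$ established there, thereby bypassing the nilpotence input in those cases.
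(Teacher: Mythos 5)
The statement in question is a conjecture, attributed to Voisin (\cite[Conj.~2.16]{V3}), and the paper does not prove it; it records it only as motivation for Definition~\ref{polydef} and the more general Conjecture~\ref{polyconj}. There is therefore no proof in the paper to compare your attempt against, and your write-up should not be phrased as a proof of Conjecture~\ref{HKdiag}.

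That said, your sketch correctly identifies the standard two-stage plan and the conjectural inputs it requires. The first stage, choosing rational constants $\mu_i$ so that $P=\sum_{i}\mu_i Z_{\textup{lef}}^i\cdot\textup{pr}_2^*h_X^{2n-2i}$ acts as the identity on $H^0(X,\Omega_X^\bullet)$, is indeed known and is exactly what the paper quotes from \cite{V3} in the proof of Proposition~\ref{HKprop}. The genuinely open part is the second stage: passing from ``acts as the identity on holomorphic forms'' to the existence of a Chow cycle $W$ with $\Delta_X-P=W$ supported on $D\times X$. Acting trivially on $H^{0,\bullet}(X)$ only controls one Hodge type of $[\Delta_X]-[P]$, and producing a cycle-theoretic support condition from this requires both the generalized Hodge conjecture (to get cohomological support) and then a Bloch--Srinivas-type input to lift to Chow, at which point one still must show that the decomposition can be taken one-sided. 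Your own final paragraph concedes this gap. The route you propose through Franchetta-type results and the co-algebra structure on $\ho(X)$ is close in spirit to what the paper does unconditionally in Proposition~\ref{P:cogen} and Theorem~\ref{HKevidence}, but note that those results establish the downstream consequence about effective zero-cycles, not Conjecture~\ref{HKdiag} itself.
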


This conjecture suggests the following definition:

\begin{definition}\label{polydef} 
A smooth projective variety $X$ admits a \emph{degree $l$ polynomial decomposition of the diagonal up to coniveau $c$} if
$$\Delta_X=Z_1+Z_2\in \textup{CH}^{n}(X\times X),$$
where $Z_1$ belongs to the subalgebra of $\textup{CH}^{\bullet}(X\times X)$ generated in degree at most $l$ and $Z_2$ is supported on $Y\times X$ for some closed $Y\subset X$ of codimension $c$. If $c$ can be taken to be positive, we say that $X$ has a \emph{degree $l$ polynomial decomposition of the diagonal}.
\end{definition}

\begin{examples}\label{ex1} \hfill
	
\begin{enumerate}[(i)]
\item $\mathbb{P}^n$ has a degree $1$ polynomial decomposition of the diagonal. Indeed, writing $h$ for $c_1(\mathcal{O}(1))$, we have
$$\Delta_{\mathbb{P}^n}=\sum_{i=0}^n\textup{pr}_1^*(h^i)\cdot \textup{pr}_2^*(h^{n-i})\in \CH^n(\mathbb{P}^n\times \mathbb{P}^n).$$

\item A variety with a rational decomposition of the diagonal in the sense of Bloch--Srinivas \cite{BlochSrinivas} has a polynomial decomposition of the diagonal in degree $1$ up to coniveau $1$.
\item Curves have a degree $1$ polynomial decomposition of the diagonal. More generally, any $n$-fold has a degree $n$ polynomial decomposition of the diagonal.

\item A surface $X$ with $p_g(X)=0$ has a degree $1$ polynomial decomposition of the diagonal if and only if it satisfies Bloch's conjecture. Murre \cite{Murre} defines a decomposition of the motive $h(X)$
$$h(X)=\sum_{i=0}^4 h^i(X).$$
The motives $h^i(X),\ i\neq 2$ are cut out by idempotent correspondences which are products of divisors. The motive $h^2(X)$ further breaks up as a sum $h^2_{\textup{alg}}(X)+h^2_{\textup{tr}}(X)$ \cite{KMP}. If Bloch's conjecture holds for $X$, then $h^2_{\textup{tr}}(X)=0$ and thus $\Delta$ is a polynomial in divisor classes. We will explain the converse in Proposition \ref{impliesgenbloch}.

\item An abelian variety $A$ has a degree $1$ polynomial decomposition of the diagonal. Indeed, the diagonal of $A$ is a rational multiple of $f^{*}(\Theta)^{\dim A}$, where $f: A^2\longrightarrow A$ is given by $f(a,b)=a-b$, and $\Theta$ is a symmetric ample divisor.
\item If $X_1,\ldots, X_r$ have polynomial decompositions of degree $l_1,\ldots, l_r$ up to coniveau $c_1,\ldots, c_r$ then $X_1\times\cdots\times X_r$ has a degree $\max_{1\leq i\leq r}(l_i)$ polynomial decomposition of the diagonal up to coniveau $\textup{max}_{1\leq i\leq r}(c_i)$.
\end{enumerate}
\end{examples}

\begin{example}\label{q=0}
Let $X$ be a smooth projective variety with $H^0(X,\Omega^1)=0$. Since 
\[\textup{Pic}(X\times X)=\textup{pr}_1^*\textup{Pic}(X)\oplus \textup{pr}_2^*\textup{Pic}(X),\]
$X$ has a degree $1$ polynomial decomposition decomposition of the diagonal up to coniveau $1$ if and only if $X$ has a rational decomposition of the diagonal. By Proposition 1 of \cite{BlochSrinivas} this is the case if and only if the degree map $\CH_0(X)\longrightarrow \mathbb{Q}$ is an isomorphism.
\end{example}

The generalized Bloch conjecture predicts when the degree map $\CH_0(X)\longrightarrow \mathbb{Q}$ is an isomorphism.

\begin{conjecture}[generalized Bloch conjecture, see e.g.\ \cite{VoisinHodge}]\label{genBloch}
Let $X$ be a smooth projective $n$-fold such that $H^{p,q}(X)=0$ for all $p\neq q$ and $p<c$. Then the cycle class map
$$cl: \CH_i(X)\longrightarrow H^{2n-2i}(X,\mathbb{Q})$$
is injective for all $i<c$.
\end{conjecture}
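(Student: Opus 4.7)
Since this is Voisin's generalized Bloch conjecture --- a well-known open problem --- any proposal is necessarily aspirational, and the natural strategy is to produce a generalized Bloch--Srinivas decomposition of the diagonal adapted to the coniveau filtration. The Hodge-theoretic input is easy to unpack: if $p + q = 2i$ with $p < q$ then $p \leq i - 1 < c$, so $H^{p,q}(X) = 0$, giving $H^k(X,\mathbb{Q}) = 0$ for odd $k < 2c$ and $H^{2i}(X,\mathbb{Q}) = H^{i,i}(X)$ for all $i < c$. Granting the Hodge and Lefschetz standard conjectures up to codimension $c - 1$, every class in these groups is algebraic and the relevant K\"unneth projectors are realized by algebraic correspondences.

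The core of the plan is to upgrade this to a decomposition
$$\Delta_X \ = \ \sum_{i=0}^{c-1} \Gamma_i + W \ \mbox{in}\ \CH^n(X\times X),$$
where each $\Gamma_i$ is supported on $V_i \times W_i$ with $\dim V_i = n-i$ and $\dim W_i = i$, and $W$ is supported on $T \times X$ with $\operatorname{codim} T \geq c$ --- the Paranjape--Laterveer refinement of the Bloch--Srinivas diagonal decomposition. Given such a decomposition, for $\alpha \in \CH_i(X)$ with $i < c$ and $cl(\alpha) = 0$, the piece $W_*\alpha$ vanishes because $\dim T \leq n - c < n - i$ forces $\alpha$ to die on $T$; the terms $(\Gamma_j)_*\alpha$ with $j \neq i$ vanish for dimension reasons on $V_j$ or $W_j$; and $(\Gamma_i)_*\alpha \in \CH_i(W_i)$ is of top dimension on $W_i$ and hence is controlled by its cohomology class, which vanishes since $cl(\alpha) = 0$. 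This yields $\alpha = 0$.

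The decisive obstacle is producing the diagonal decomposition from the Hodge-theoretic hypothesis alone: Paranjape's form of Bloch--Srinivas requires as input that $\CH_i(X)$ be supported in low dimension for $i < c$, which is essentially what we are trying to conclude. The paper's framework sidesteps this circularity via its Conjecture~\ref{conj:c1}, predicting that generation of holomorphic forms in low degree forces an algebraic polynomial decomposition of the diagonal; together with Nori's Conjecture~\ref{noriconj}, this yields the statement in coniveau $1$ (Proposition~\ref{impliesgenbloch}). My proposal for the general case is to iterate that mechanism --- applying the polynomial-decomposition conjecture with the degree parameter adapted to $c$ and tracking the coniveau of each summand as successive layers are peeled off --- the hard step being to ensure the induction terminates exactly at coniveau $c$ rather than unraveling the decomposition too far.
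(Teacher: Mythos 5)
The statement you are asked about is Conjecture~\ref{genBloch}, the generalized Bloch conjecture: the paper does not prove it and does not claim to. It is quoted as a well-known open problem, and the only thing the paper establishes in its direction is Proposition~\ref{impliesgenbloch}, which shows that Conjecture~\ref{polyconj} in coniveau $c=1$ (a degree~$1$ polynomial decomposition of the diagonal up to coniveau~$1$ for varieties with $H^0(X,\Omega^p)=0$ for $p\geq 2$) forces $F^2\CH_0(X)=0$ and hence the $c=1$ case of Conjecture~\ref{genBloch}. Your proposal is therefore not, and cannot be, a proof; it is a strategy sketch conditional on several open conjectures, and you say so yourself. To your credit, the Hodge-theoretic unpacking is correct, the Bloch--Srinivas/Paranjape--Laterveer mechanism you describe is the standard one (and, granting the refined decomposition, the dimension count showing $W_*\alpha=0$ and $(\Gamma_j)_*\alpha=0$ for $j\neq i$, plus injectivity of the cycle class map on top-dimensional cycles of $W_i$, does yield injectivity of $cl$ on $\CH_i(X)$ for $i<c$), and you correctly identify the paper's Proposition~\ref{impliesgenbloch} as the only unconditional bridge available here.

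The genuine gap is exactly the one you flag but do not close: producing the decomposition $\Delta_X=\sum_i\Gamma_i+W$ with the stated support conditions requires, via Bloch--Srinivas-type arguments, that $\CH_i(X)$ already be supported in small dimension for $i<c$ --- which is essentially the conclusion --- so the argument as written is circular, and invoking the Hodge and Lefschetz standard conjectures does not break the circle, since algebraicity of K\"unneth or coniveau projectors in cohomology does not by itself control Chow groups. Your proposed fix, iterating the paper's mechanism (Conjecture~\ref{polyconj} plus Nori's Conjecture~\ref{noriconj}) in higher coniveau, is not carried out: the paper's Proposition~\ref{impliesgenbloch} works only for $c=1$ and only because a degree~$1$ decomposition there is a polynomial in divisor classes, whose action on $\CH_0$ can be analyzed by hand through $\operatorname{Pic}(X\times X)$ and the Albanese; no analogous analysis of the action of a degree~$l$ polynomial decomposition up to coniveau $c>1$ on $\CH_i$ for $0<i<c$ is given, and the "peeling off successive coniveau layers" step is precisely where a new idea would be needed. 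So the proposal should be read as a reasonable survey of how one might attack the conjecture, not as a proof, and it does not go beyond what the paper itself establishes conditionally.
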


\subsection{Conjecture and relationship with the generalized Bloch conjecture}

Example \ref{q=0} explains how the generalized Bloch conjecture predicts that a variety with $H^0(X,\Omega^1)=0$ has a degree $1$ polynomial decomposition of the diagonal up to coniveau $1$ if and only if
$$H^k(X,\mathbb{Q})=N_H^1H^k(X,\mathbb{Q}) \qquad \forall k>0,$$
where $N_H^\bullet$ denotes the Hodge coniveau filtration. Recall that $N_H^rH^{\bullet}(X,\mathbb{Q})$ is by definition the largest Hodge substructure $V\subset H^\bullet(X,\mathbb{Q})$ which has coniveau at least $r$, namely such that $V_{\mathbb{C}}^{p,q}=0$ if $p< r$.
\medskip

This provides evidence that a polynomial decomposition of the diagonal can be detected by Hodge theory. 
\begin{conjecture}\label{polyconj}
Let $X$ be a smooth projective $n$-fold. Then $X$ admits a degree $l$ polynomial decomposition of the diagonal up to coniveau $c$ if and only if
$$N_H^rH^\bullet(X,\mathbb{Q})/N_H^{r+1}H^\bullet(X,\mathbb{Q})$$
is generated in degree at most $ l+r$  for all $r< c$.
\end{conjecture}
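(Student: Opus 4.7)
The plan is to prove the two directions of the biconditional separately. The forward implication is a concrete calculation using K\"unneth decompositions and standard correspondence arguments, while the reverse implication is the main obstacle and appears to require deep conjectures on algebraic cycles.

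For the forward direction, suppose $\Delta_X = Z_1 + Z_2$ with $Z_1$ in the subalgebra of $\CH^\bullet(X \times X)$ generated in codimension $\leq l$ and $Z_2$ supported on $Y \times X$ with $\textup{codim}_X(Y) = c$. Let $\alpha \in N_H^r H^\bullet(X,\Q)$ with $r < c$. Then $\alpha = (\Delta_X)^*\alpha = Z_1^*\alpha + Z_2^*\alpha$, and since $Z_2$ factors through a desingularization of $Y$, the term $Z_2^*\alpha$ lies in $N_H^c H^\bullet(X,\Q) \subseteq N_H^{r+1} H^\bullet(X,\Q)$. Writing $Z_1 = P(a_1,\dots,a_s)$ with $a_i \in \CH^{\leq l}(X \times X)$ and expanding via the K\"unneth formula, the action $Z_1^*\alpha$ is a polynomial in the ``second-factor'' K\"unneth components of the $[a_i]$, which are Hodge classes of cohomological degree at most $2l_i \leq 2l$. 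Hence $\alpha$ modulo $N_H^{r+1}$ lies in the subalgebra generated in codimension $\leq l \leq l+r$, proving one implication.

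For the reverse direction, the plan is to produce a polynomial decomposition of $[\Delta_X] \in H^{2n}(X \times X, \Q)$ modulo $N_H^c$, and then lift this cohomological identity to an equality in $\CH^n(X \times X)$. Starting from the K\"unneth decomposition $[\Delta_X] = \sum_\lambda \alpha_\lambda \otimes \beta_\lambda$ (with $\{\alpha_\lambda\}$ a basis of $H^\bullet(X)$ and $\{\beta_\lambda\}$ its Poincar\'e dual), and using the hypothesis that each $N_H^r H^\bullet(X,\Q) / N_H^{r+1}$ is generated in degree $\leq l+r$ for $r<c$, one would iteratively rewrite each $\alpha_\lambda$ (modulo a piece of higher coniveau) as a polynomial in classes of degree $\leq 2(l+r)$, absorbing the successive coniveau-$r$ contributions into a candidate $[Z_1]$ and the terminal $N_H^c$ remainder into a candidate $[Z_2]$ supported in codimension $c$.

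The two main obstacles to completing this direction are (i) the Hodge conjecture for $X$ and $X \times X$, needed to realize as algebraic the Hodge classes produced by the K\"unneth rewriting, and (ii) Nori's Conjecture~\ref{noriconj}, needed to control the kernel of the cycle class map so that a cohomological equality lifts to an equivalence in Chow. I expect the reverse direction to be the principal obstruction: unconditionally it appears to require both the Hodge conjecture on $X \times X$ and Nori's conjecture, though it should be accessible for varieties with finite-dimensional motive in the sense of Kimura--O'Sullivan, such as abelian varieties or hyper-K\"ahler varieties satisfying the Lefschetz and nilpotence conjectures as in Proposition~\ref{HKprop}.
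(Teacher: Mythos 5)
You should note at the outset that the statement you were asked to prove is labelled a \emph{conjecture} in the paper; no proof of the full biconditional is claimed. What the paper does prove is precisely your forward direction, as Proposition~\ref{P:poldecalg}: there, given a simple Hodge structure $V\subset N_H^rH^d(X,\mathbb{Q})$ not contained in $N_H^{r+1}$, one acts on a class $\alpha\in V_{\mathbb{C}}^{r,d-r}$ by a monomial $W_1\cdots W_k$ in correspondences of codimension $\leq l$, expands each $[W_i]$ into K\"unneth components, and bounds the bidegree of the surviving pieces by $(r,l)$, so that $[W_1\cdots W_k]^*\alpha$ lands in the subspace generated in degree $\leq l+r$. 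Your version of this argument, in which you first write $\alpha=\Delta_X^*\alpha=Z_1^*\alpha+Z_2^*\alpha$ and observe that $Z_2^*$ factors through a resolution of $Y$ and hence raises Hodge coniveau to at least $c$, is the same argument with the role of $Z_2$ spelled out a little more explicitly; so this half of your proposal matches the paper's content.

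For the reverse implication the paper offers no proof, and your assessment that it is the substantive, genuinely conjectural content of the statement is correct. Your sketch of the difficulties is in the right spirit, but I would flag one overreach: even granting the Hodge conjecture on $X\times X$ to realize the relevant K\"unneth pieces algebraically, and Nori's Conjecture~\ref{noriconj} to control filtrations on Chow groups, it is not at all clear that these two inputs alone let you lift a polynomial expression for $[\Delta_X]$ modulo $N_H^c$ in cohomology to an identity $\Delta_X = Z_1 + Z_2$ in $\CH^n(X\times X)$ with $Z_2$ supported in codimension $c$; the paper is deliberately silent on any such implication, and one would expect to need finite-dimensionality of the motive or the full Bloch--Beilinson package on top of those ingredients. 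The honest summary is that your proposal correctly reproduces the one proved direction (Proposition~\ref{P:poldecalg}) and correctly identifies, without resolving, where the other direction is stuck.
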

\begin{remark}
It suffices to check this in degree $\leq n$ as the hard Lefschetz theorem then implies the same generation statement in high degree. Example \ref{ex1} (4) illustrates why the shift in degree of generation according to coniveau is necessary. For example, a smooth cubic surface has a degree $1$ polynomial decomposition of the diagonal but its primitive cohomology is not generated in degree $1$.
\end{remark}

As is often the case, it is easy to deduce Hodge-theoretic information from cycle-theoretic information:
\begin{proposition}\label{P:poldecalg}
 If $X$ has a degree $l$ decomposition of the diagonal up to coniveau $c$ the algebra
$$N_H^rH^{\bullet}(X,\mathbb{Q})/N_H^{r+1}H^{\bullet}(X,\mathbb{Q})$$
is generated in degrees at most $l+r$ for all $r< c$.
\end{proposition}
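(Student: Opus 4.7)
The plan is to let the correspondences $[Z_1]$ and $[Z_2]$ act on $H^{\bullet}(X,\mathbb{Q})$ and read off the algebraic generation statement from the K\"unneth decomposition. A crucial preliminary observation is that the support condition ``$Z_2$ supported on $Y\times X$'' does not, in itself, control the image of $(Z_2)_*$: for that one wants support on $X\times Y$. I would therefore begin by transposing. Since $\Delta_X = {}^{t}\Delta_X$, the decomposition yields $\Delta_X = {}^{t}Z_1 + {}^{t}Z_2$, where ${}^{t}Z_1$ still lies in the subalgebra of $\CH^{\bullet}(X\times X)$ generated in codimension $\leq l$ (transposition is a codimension-preserving ring automorphism) and ${}^{t}Z_2$ is supported on $X\times Y$.

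Fix $r<c$ and $\alpha \in N_H^r H^k(X,\mathbb{Q})$. The identity $\alpha = ({}^{t}Z_1)_*\alpha + ({}^{t}Z_2)_*\alpha$ in $H^k(X,\mathbb{Q})$ splits the analysis into two independent calculations. For the error term, I would write ${}^{t}Z_2 = (\mathrm{id}\times j)_* W'$ with $j: Y\hookrightarrow X$ and apply the projection formula twice:
\[
({}^{t}Z_2)_*\alpha \ =\ j_*\bigl((p_2)_*(W' \cdot p_1^*\alpha)\bigr) \ \in\ j_* H^{\bullet}(Y,\mathbb{Q}) \ \subseteq\ N_H^c H^k(X,\mathbb{Q}) \ \subseteq\ N_H^{r+1} H^k(X,\mathbb{Q}),
\]
where $p_1, p_2$ are the projections from $X\times Y$ and the last inclusion uses $c \geq r+1$.

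For the main term, I would invoke K\"unneth: every class in $H^{\leq 2l}(X\times X,\mathbb{Q})$ is a sum $\sum u\otimes v$ with $\deg u + \deg v \leq 2l$, so each K\"unneth factor lives in cohomological degrees at most $2l$. Multiplying out the polynomial expression of ${}^{t}Z_1$ in classes of $\CH^{\leq l}(X\times X)$ therefore gives $[{}^{t}Z_1] = \sum_J V_J\otimes U_J$ with $U_J, V_J$ in the subalgebra $A_l \subseteq H^{\bullet}(X,\mathbb{Q})$ generated by $H^{\leq 2l}(X,\mathbb{Q})$, whence
\[
({}^{t}Z_1)_*\alpha \ =\ \sum_J \Bigl(\int_X V_J \cdot \alpha \Bigr) U_J \ \in\ A_l.
\]
Combining with the previous step, $\alpha$ is congruent modulo $N_H^{r+1} H^{\bullet}(X,\mathbb{Q})$ to an element of $A_l \subseteq A_{l+r}$, the subalgebra generated in codimensions at most $l+r$, as required. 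The only real subtlety is the transposition observation at the very start: once one recognizes that the roles of ``$Y\times X$'' and ``$X\times Y$'' are asymmetric for controlling the image of a correspondence's action, the K\"unneth and projection-formula bookkeeping is routine.
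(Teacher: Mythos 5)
Your transposition step and the handling of the error term $Z_2$ are fine, and in fact mirror the paper's implicit treatment (the paper works with the upper-star action, so the $Y\times X$ support of $Z_2$ puts the image of $[Z_2]^*$ directly into $N_H^cH^\bullet(X,\mathbb{Q})$; you work with the lower-star action and transpose first, which is equivalent). The genuine gap is in your bound for the $Z_1$ contribution. You use only that $[W_j]\in H^{2d_j}(X\times X,\mathbb{Q})$ with $d_j\le l$, so each K\"unneth factor has cohomological degree at most $2l$; this places $({}^tZ_1)_*\alpha$ in the subalgebra generated by $H^{\le 2l}(X,\mathbb{Q})$, i.e.\ you prove generation in degree $\le 2l$, not $\le l+r$. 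That is strictly weaker than the statement whenever $r<l$. For $r=0$, the most important case, you obtain $2l$ while the proposition requires $l$ --- and this is exactly what feeds the paper's claim that a degree-$l$ polynomial decomposition forces $H^0(X,\Omega_X^\bullet)$ to be generated in degree $\le l$. (Your final sentence, asserting $A_l\subseteq A_{l+r}$ with $A_{l+r}$ ``generated in codimensions at most $l+r$,'' quietly conflates ``degree $\le 2l$'' and ``degree $\le l+r$,'' which only coincide when $r\ge l$.)

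The missing ingredient is to track Hodge types, not merely cohomological degrees. Take a simple Hodge substructure $V\subset N_H^rH^d(X,\mathbb{Q})$ not contained in $N_H^{r+1}H^d(X,\mathbb{Q})$, and a nonzero $\alpha\in V_{\mathbb{C}}^{r,d-r}$. Writing $[W_i]=\sum_{a_i,b_i=0}^{d_i}v_i^{a_i,b_i}\otimes w_i^{d_i-a_i,d_i-b_i}$ with $v_i^{a_i,b_i}\in H^{a_i,b_i}(X)$, the only tuples $(\mathbf{a},\mathbf{b})$ that pair nontrivially with $\alpha$ in the expansion of $[W_1\cdots W_k]^*\alpha$ are those with $\sum_ia_i=r$ and $\sum_ib_i=d-r$, which forces $a_i\le r$ for every $i$. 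Combined with $b_i\le d_i\le l$, each surviving generator $v_i^{a_i,b_i}$ has cohomological degree $a_i+b_i\le l+r$. This is precisely how the paper obtains the sharp bound; without exploiting the Hodge type of $\alpha$ one cannot improve $2l$ to $l+r$.
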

\begin{proof}
Observe that the subspace of $H^d(X,\mathbb{Q})$ generated in degrees at most $l+r$ is a sub-Hodge structure whose complexification is the subspace of $H^d(X,\mathbb{C})$ generated in degrees at most $l+r$. Consider a simple Hodge structure 
$$V\subset N_H^rH^d(X,\mathbb{Q})$$
which is not contained in $N_H^{r+1}H^{d}(X,\mathbb{Q})$ and cycles $W_{1},\ldots, W_k\subset X\times X$ of codimension $d_1,\ldots, d_k \leq l$ with
\[n=_{\text{def}} \dim X= d_1+\cdots+d_k.\]
For each $i$, decompose the cycle class of $W_i$ into K\"unneth components
\[[W_i]=\sum_{a_i,b_i=0}^{d_i}v_i^{a_i,b_i}\otimes w_i^{d_i-a_i,d_i-b_i},\]
where 
\[v_i^{a_i,b_i}\otimes w_i^{d_i-a_i,d_i-b_i}\in H^{a_i,b_i}(X)\otimes H^{d_i-a_i,d_i-b_i}(X).\]
Given a non-zero class $\alpha\in V_\mathbb{C}^{r,d-r}$, we have
\[[W_1\cdots W_k]^*\alpha=\sum_{|\mathbf{a}|=r,|\mathbf{b}|=d-r}\left[\prod_{i=1}^k(v_i^{a_i,b_i}\otimes w_i^{d_i-a_i,d_i-b_i}) \right]^*\alpha \ \,\]
where the sum is over tuples $\mathbf{a}=(a_1,\ldots, a_k)$, $\mathbf{b}=(b_1,\ldots b_k)$ with sums $r$ and $d-r$ respectively. Since 
\[(v_1^{a_1,b_1}\otimes w_1^{d_1-a_1,d_1-b_1}\cdots v_k^{a_k,b_k}\otimes w_k^{d_k-a_k,d_k-b_k})^*\alpha\]
is a multiple of $v_1^{a_1,b_1}\cdots v_k^{a_k,b_k}$ and $a_i\leq r$, $b_i\leq d_i\leq l$, we must have $a_i+b_i\leq l+r$. This shows that $[W_1\cdots W_k]^*\alpha$ is in the subspace of $H^d(X,\mathbb{C})$ generated in degree $\leq l+r$, and thus that $V$ is contained in the subspace of $H^d(X,\mathbb{Q})$ generated in degree $\leq l+r$.
\end{proof}

\begin{proposition}\label{impliesgenbloch}
Let $X$ be a smooth projective $n$-fold with $H^{0}(X,\Omega^p)=0$ for all $p\geq 2$ and set $F^{2}\CH_0(X)$ to be either $F_{\mathrm{V}}^{2}\CH_0(X)$ or the kernel of the albanese map on zero-cycles of degree 0. 
If~$X$ has a degree $1$ polynomial decomposition of the diagonal up to coniveau $1$
then $F^{2}\CH_0(X)=0$. In particular, Conjecture \ref{polyconj} for $c=1$ implies the generalized Bloch conjecture for $c=1$. 
  \end{proposition}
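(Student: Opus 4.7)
The plan is to apply the decomposition $\Delta_X = Z_1 + Z_2$ to an arbitrary $z \in F^2 \CH_0(X)$ and show that each
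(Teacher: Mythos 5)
Your proposal breaks off mid-sentence after announcing the plan, so there is nothing substantive to check yet. The announced strategy — apply $\Delta_X = Z_1 + Z_2$ to $z \in F^2\CH_0(X)$ and show each piece acts as zero — is indeed the right starting point and coincides with the paper's. The routine part is dispensing with $Z_2$: since $Z_2$ is supported on $Y \times X$ with $Y \subset X$ a proper closed subset, $(Z_2)_*$ vanishes on $\CH_0(X)$, so $\mathrm{id}_{\CH_0(X)} = (Z_1)_*$.

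The genuine content, which your fragment does not reach, is proving that $(Z_1)_*$ kills $F^2\CH_0(X)$ when $Z_1$ is a polynomial in divisor classes on $X \times X$. This requires analyzing an arbitrary monomial $D_1 \cdots D_n$ with $D_i \in \mathrm{Pic}(X\times X)$, using the decomposition $\mathrm{Pic}(X\times X) = \mathrm{pr}_1^*\mathrm{Pic}(X) \oplus \mathrm{pr}_2^*\mathrm{Pic}(X) \oplus H$, where $H$ is the group of ``mixed'' divisors pulled back from $\mathrm{Alb}(X)\times\mathrm{Alb}(X)$. The cases where some $D_i \in \mathrm{pr}_1^*\mathrm{Pic}(X)$, or where $n-1$ of them lie in $\mathrm{pr}_2^*\mathrm{Pic}(X)$, are easy; the delicate case involves factors from $H$, and there one must use the hypothesis $H^0(X,\Omega^2) = 0$ (which forces the Albanese image to be at most a curve) to reduce products of $H$-divisors to products of $\mathrm{pr}_1^*$- and $\mathrm{pr}_2^*$-divisors. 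Without carrying out this case analysis — or at minimum identifying why $H$ is the obstruction and how $H^0(X,\Omega^2)=0$ neutralizes it — the proof is not there. Please complete the argument along these lines.
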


\begin{proof}
Since $X$ has a degree $1$ polynomial decomposition of the diagonal up to coniveau $1$ we can write
$$\Delta_X=Z_1+Z_2\in \CH^n(X\times X),$$
where $Z_1$ is a polynomial in divisors and $Z_2$ is supported on $Y\times X$ for some divisors $Y\subset X$. Accordingly,
$$\text{id}_{\CH_0(X)}={\Delta_{X}}_*= {Z_1}_*: \CH_0(X)\longrightarrow \CH_0(X).$$
We will show that any monomial $D_1\cdots D_n$, where $D_i\in \text{Pic}(X\times X)$, gives a zero map on $F^2\CH_0(X)$ which will imply that $F^2\CH_0(X)=0$. Observe that
$$\textup{Pic}(X\times X)=\textup{pr}_1^*\textup{Pic}(X)\oplus \textup{pr}_2^*\textup{Pic}(X)\oplus H,$$
where $H$ is the group that consists of pullbacks of divisors classes on $\textup{Alb}(X)\times \textup{Alb}(X)$ which have trivial restriction to the factors.
\medskip

Given a monomial $D_1\cdots D_n$, if one of the $D_i$ is in $\text{pr}_1^*\textup{Pic}(X)$ then the map
$$(D_1\cdots D_n)_*: \CH_0(X)\longrightarrow \CH_0(X)$$
is identically zero. Similarly, if at least $n-1$ of the $D_i$ belong to $\text{pr}_2^*\text{Pic}(X)$ then the same map factors through the Chow group of zero-cycles on a variety of dimension $1$ so that the induced map
$$(D_1\cdots D_n)_*: F^2\CH_0(X)\longrightarrow F^2\CH_0(X)$$
is identically zero.
\medskip

Finally, suppose that $D_1,\ldots, D_i$ belong to $\text{pr}_2^*\text{Pic}(X)$ and that $D_{i+1},\cdots, D_{n}$ are pulled back from divisors $D_{i+1}',\ldots, D_{n}'$ on $\text{Alb}(X)\times\text{Alb}(X)$ under $\alpha\times \alpha$, where $\alpha: X\longrightarrow \text{Alb}(X)$ is the Albanese morphism. Since $H^0(X,\Omega^2)=0$, the Albanese dimension of $X$ is at most $1$. We can assume that the image of $X$ in its Albanese is a curve $C$, or else $H=0$ and we are done by the considerations above. Since $D_{i+1}\cdot D_n=0$ if $i<n-2$, it suffices to consider the case $i=n-2$. Then $D_{n-1}\cdot D_n$ is the pullback of a zero cycle on $C\times C$. Since the pullback of a point on $C\times C$ is the product of a divisor in $\textup{pr}_1^*\textup{Pic}(X)$ and a divisor in $\textup{pr}_2^*\textup{Pic}(X)$, we see that $D_1\cdots D_n$ induces the zero map on $\CH_0(X)$.
\end{proof}

\subsection{Relation between Conjecture \ref{polyconj} and Main Conjecture \ref{mainconj}}
The following proposition shows that Conjecture \ref{polyconj} for $c=1$ along with Nori's Conjecture \ref{noriconj} implies
Main Conjecture~\ref{mainconj} using Voisin's filtration as a candidate Bloch--Beilinson filtration.

\begin{proposition}\label{P:polNorimain}
	Assume that $X$ has a degree $l$ polynomial decomposition of the diagonal and that $X$ satisfies Nori's Conjecture~\ref{noriconj}. 
	Then, for closed point $x_1,\ldots, x_m,y_1,\ldots, y_m\in X,$
	\begin{equation*}
	\sum_{i=1}^m [x_i]=\sum_{i=1}^m [y_i] \ \mbox{in} \ \CH_0(X) \ \ \iff \ \ \sum_{i=1}^m [x_i]=\sum_{i=1}^m [y_i] \ \mbox{in} \ \CH_0(X)/F_{\mathrm{V}}^{ml+1}\CH_0(X).
	\end{equation*} 
\end{proposition}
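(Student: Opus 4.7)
The forward implication is immediate; I focus on the reverse. The plan is to exploit the polynomial decomposition of the diagonal, apply Nori's conjecture to force cycle-theoretic equalities among monomials of low total degree, and then invoke Newton's identities to promote them to all degrees.

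First, I would write $\Delta_X = Z_1 + Z_2 \in \CH^n(X\times X)$, where $Z_1 = \sum_\beta c_\beta \prod_{j=1}^r W_j^{e_{j,\beta}}$ is a polynomial under the intersection product in finitely many correspondences $W_1,\ldots,W_r$ of codimensions $d_1,\ldots,d_r \leq l$, and $Z_2$ is supported on $D\times X$ for some divisor $D\subset X$. Since any zero-cycle on $X$ admits a representative supported in $X\setminus D$, the action $(Z_2)_*$ vanishes on $\CH_0(X)$, so $(Z_1)_* = \mathrm{id}_{\CH_0(X)}$. Setting $w:=\sum_i[x_i]-\sum_i[y_i] \in F^{ml+1}_{\mathrm V}\CH_0(X)$, it therefore suffices to show $(Z_1)_*w=0$.

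Under Nori's conjecture, the defining property of $F^\bullet_{\mathrm V}$ upgrades to the Voisin-filtration analogue of Lemma~\ref{corresp}: for any smooth projective variety $Y$ and any $\Gamma\in\CH^j(X\times Y)$, if $w\in F^{j+1}_{\mathrm V}\CH_0(X)$ then $\Gamma_*w=0$ in $\CH^j(Y)$. Indeed, by Nori it is enough to verify that $(\Gamma_*w)|_T=0$ for every smooth projective $j$-dimensional subvariety $T\subset Y$, and the base-change identity $(\Gamma_*w)|_T = (\iota_T^*\Gamma)_*w$ reduces this to the definition of $F^{j+1}_{\mathrm V}$. I apply this to the intersection-product correspondence $\prod_j W_j^{e_j}\in\CH^{\sum_j e_j d_j}(X\times X)$ for each multi-index $\vec e$ with $|\vec e|\leq m$; its codimension is at most $|\vec e|\cdot l\leq ml$. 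A standard projection-formula computation (using $\iota_x^*(A\cdot B) = \iota_x^*A\cdot\iota_x^*B$) gives $(\prod_j W_j^{e_j})_*[x]=\prod_j W_{j,x}^{e_j}$, whence
\[\sum_i\prod_j W_{j,x_i}^{e_j}\ =\ \sum_i\prod_j W_{j,y_i}^{e_j}\ \text{in}\ \CH^{\sum_j e_j d_j}(X),\qquad\text{for all }\vec e\text{ with }|\vec e|\leq m.\]

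To reach the regime $|\vec e|>m$ needed for the monomials $\prod_j W_j^{e_{j,\beta}}$ in $Z_1$ (which satisfy $\sum_j e_{j,\beta}d_j = n$), I polarize: introduce formal variables $u_1,\ldots,u_r$ and set $T_x:=\sum_j u_j W_{j,x}$, viewed as an element of the commutative $\mathbb Q$-algebra $\CH^*(X)[u_1,\ldots,u_r]$. Expanding $T_x^k = \sum_{|\vec e|=k}\binom{k}{\vec e}u^{\vec e}\prod_j W_{j,x}^{e_j}$, the displayed equalities become $\sum_i T_{x_i}^k = \sum_i T_{y_i}^k$ in $\CH^*(X)[u]$ for each $k\leq m$. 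Newton's identities, being polynomial relations over $\mathbb Q$ valid in every commutative $\mathbb Q$-algebra, show that the power sums $p_1,\ldots,p_m$ determine the elementary symmetric functions $e_1,\ldots,e_m$, which in turn determine $p_k$ for all $k\geq 1$; hence $\sum_i T_{x_i}^k = \sum_i T_{y_i}^k$ for every $k$. Extracting the coefficient of $u^{\vec e}$ yields $\sum_i\prod_j W_{j,x_i}^{e_j} = \sum_i\prod_j W_{j,y_i}^{e_j}$ for all $\vec e$. Summing these identities over $\beta$ with coefficients $c_\beta$ gives $(Z_1)_*(\sum_i[x_i]) = (Z_1)_*(\sum_i[y_i])$, and the relation $(Z_1)_*=\mathrm{id}$ finishes the argument. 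The main subtlety is the polarization/Newton step: one must treat $\CH^*(X)[u]$ as a genuine commutative $\mathbb Q$-algebra in which Newton's identities apply coefficient-wise, and verify that the formal variables $u_j$ correctly bookkeep the different codimensions of the $W_{j,x}$.
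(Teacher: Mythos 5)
Your proof is correct, and its skeleton matches the paper's: decompose $\Delta_X=Z_1+Z_2$, reduce to showing $(Z_1)_*$ kills $w=\sum[x_i]-\sum[y_i]$, use Nori's conjecture (via the analogue of Lemma~\ref{corresp} for $F_{\mathrm V}^\bullet$) to obtain equalities $\sum_i\prod_j W_{j,x_i}^{e_j}=\sum_i\prod_j W_{j,y_i}^{e_j}$ for all multi-indices $\vec e$ with $|\vec e|\le m$, then promote to all $\vec e$ by a symmetric-function argument, and finally substitute into the expression for $Z_1$. The one place where you genuinely diverge is the combinatorial promotion step. The paper invokes Briand's result \cite[Cor.~5]{Briand} on multisymmetric power sums (Proposition~\ref{multisym}) to pass from $|\vec e|\le m$ to arbitrary $\vec e$. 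You instead introduce auxiliary indeterminates $u_1,\ldots,u_r$, set $T_x=\sum_j u_j W_{j,x}\in\CH^*(X)[u]$, observe that the low-degree equalities amount to $\sum_i T_{x_i}^k=\sum_i T_{y_i}^k$ for $k\le m$, apply the classical Newton's identities to get these equalities for all $k$, and extract coefficients of $u^{\vec e}$ (the multinomial coefficients being invertible over $\mathbb Q$). This polarization-plus-Newton argument is a standard and self-contained proof of precisely the multisymmetric generation fact the paper outsources to Briand; it trades a citation for a slightly longer but elementary computation, and is a legitimate alternative. Two further small remarks: your explicit justification that the Lemma~\ref{corresp} analogue holds for $F_{\mathrm V}^\bullet$ (restricting $\Gamma$ to a $j$-fold $T\subset Y$ and invoking the definition of $F_{\mathrm V}^{j+1}$) is welcome, since the paper applies the lemma to $F_{\mathrm V}^\bullet$ without comment; and when invoking Nori you should silently replace $T$ by a desingularization so that $(\iota_T^*\Gamma)_*$ is the pushforward along a smooth projective $j$-fold, exactly as required by the definition of the Voisin filtration.
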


\begin{proof}
By assumption
 $\Delta_X=Z_1+Z_2$, where $Z_1$ is in the subalgebra of $\textup{CH}^\bullet(X\times X)$ generated in degrees $\leq l$, and $Z_2$ is supported on $Y\times X$ for some proper closed subset $Y\subset X$.
 Hence,
$${\Delta_X}_*={Z_1}_*: \textup{CH}_0(X)\longrightarrow \textup{CH}_0(X).$$
Consider cycles $W_1,\ldots, W_n$ in $\bigoplus_{i\leq l}\textup{CH}^{i}(X\times X)$ generating a subalgebra containing $Z_1$.
Consider $\mathbf{a}=(a_1,\ldots, a_n)\in \mathbb{N}^n\setminus \{\mathbf{0}\}$ with $a_1+\cdots+a_n\leq m$. The cycle
$$P_{\mathbf{a}}=_{\textup{def}}W_1^{a_1}\cdots W_n^{a_n}$$
has codimension at most $ml$. Given $x_1,\ldots, x_m, y_1,\ldots, y_m\in X$ such that
$$\sum_{i=1}^m [x_i]-\sum_{i=1}^m [y_i]\ \ \mbox{lies in}\ F^{ml+1}_{\mathrm{V}}\textup{CH}_0(X),$$ 
Nori's Conjecture \ref{noriconj} and Lemma \ref{corresp} imply that
$$\sum_{i=1}^m P_{\mathbf{a},x_i}=\sum_{i=1}^m P_{\mathbf{a},y_i}.$$
The same argument as in the proof of Proposition \ref{HKprop} shows that
$$P_{\mathbf{a},x_i}= W_{1,x_i}^{a_1}\cdots W_{n,x_i}^{a_n}\in \textup{CH}^\bullet(X),$$
so that
\begin{equation} \label{equalpa}\sum_{i=1}^m W_{1,x_i}^{a_1}\cdots W_{n,x_i}^{a_n}=\sum_{i=1}^m W_{1,y_i}^{a_1}\cdots W_{n,y_i}^{a_n}\in \textup{CH}^\bullet(X).\end{equation}

In order to proceed, we will need a generalization of the fundamental theorem on symmetric polynomials. 
Consider variables $w_j^{(i)}$ where $1\leq j\leq n$ and $1\leq i\leq m$. 
The symmetric group $\mathfrak{S}_m$ acts on the ring $\mathbb{Q}[w_{j}^{(i)}]$ by permuting the superscript and the subalgebra $\mathbb{Q}[w_{j}^{(i)}]^{\mathfrak{S}_m}$ is called the subalgebra of multi-symmetric polynomials. 
Given an element $\mathbf{a}=_{\textup{def}}(a_1,\ldots, a_n)\in \mathbb{N}^n\setminus\mathbf{0}$, the corresponding power sum multisymmetric polynomial is
$$p_{\mathbf{a}}(w_{j}^{(i)})=\sum_{s=1}^m {w_1^{(s)}}^{a_1}\cdots {w_n^{(s)}}^{a_n}\in \mathbb{Q}[w_{j}^{(i)}]^{\mathfrak{S}_m}.$$ It is a classical fact that $\mathbb{Q}[w_{j}^{(i)}]^{\mathfrak{S}_m}$ is generated by elementary multisymmetric power sums (see the references in the introduction of \cite{Briand}). 
\begin{prop}[{\cite[Cor.~5]{Briand}}]\label{multisym}
All power sums are in the ideal generated by the powers sums $p_{\mathbf{a}}(w_{j}^{(i)})$ with $a_1+\cdots+a_n\leq m$.
\end{prop}

Now consider two morphisms
$f,g: \mathbb{Q}[w_{j}^{(i)}]\longrightarrow \CH^\bullet(X)$ given respectively by $f(w_j^{(i)})=W_{j, x_i}$ and $g(w_j^{(i)})=W_{j, y_i}$. The equality \eqref{equalpa} amounts to $f(p_{\mathbf{a}}(w_{j}^{(i)}))=g(p_{\mathbf{a}}(w_{j}^{(i)}))$ for all $\mathbf{a}=(a_1,\ldots, a_n)$ in $\mathbb{N}^n\setminus \{\mathbf{0}\}$ satisfying $a_1+\cdots+a_n\leq m$.
Proposition \ref{multisym} then implies that $f(p_{\mathbf{a}}(w_{j}^{(i)}))=g(p_{\mathbf{a}}(w_{j}^{(i)}))$, namely
$$\sum_{i=1}^m P_{\mathbf{a},x_i}=\sum_{i=1}^m P_{\mathbf{a},y_i}\in \textup{CH}^\bullet(X),$$
for any $\mathbf{a}=(a_1,\ldots, a_n)\in \mathbb{N}^n\setminus \{\mathbf{0}\}$. Since $Z_1$ is a polynomial in the $W_i$, it follows that
$$\sum_{i=1}^m [x_i]=\sum_{i=1}^m Z_{1,x_i}=\sum_{i=1}^m Z_{1,y_i}=\sum_{i=1}^m [y_i]\ \ \mbox{in}\ \textup{CH}_0(X),$$
thereby concluding the proof of the proposition.
\end{proof}

\vspace{1em}

\bibliography{biblio}

\end{document}